\documentclass[preprint]{elsarticle}
\usepackage[utf8]{inputenc}
\usepackage{appendix}
\usepackage{subcaption}
\usepackage[a4paper, margin=0.8in]{geometry} 

\usepackage{graphicx}

\usepackage[most]{tcolorbox}
\usepackage{tikz,tikz-cd}
\usetikzlibrary{decorations.pathmorphing, decorations.pathreplacing, patterns, angles, quotes}
\usepackage{pgfplots}
\pgfplotsset{compat=1.18}

\usepackage{xspace}
\usepackage{comment}
\usepackage{amsmath,amssymb,amsfonts,bm,bbm,amsthm}
\usepackage{multirow}
\usepackage{derivative}
\usepackage{xcolor}
\usepackage{pifont}
\usepackage{hyperref}

\renewcommand\d{\ensuremath{\mathrm{d}}}

\newcommand{\bbR}{\mathbb{R}}

\newcommand{\inpr}[3][]{\ensuremath{( #2, \, #3 )_{#1}}}
\newcommand{\dualpr}[3][]{\ensuremath{\langle #2, \, #3 \rangle_{#1}}}

\newcommand{\mat}[1]{\widehat{\bm{#1}}}
\newcommand{\matcomp}[1]{\widehat{#1}}

\newtheorem{remark}{Remark}
\newtheorem{proposition}{Proposition}

\journal{ArXiv}

\begin{document}

\begin{frontmatter}

\title{Mixed finite element discretization of intrinsic geometrically exact beams for explicit multibody dynamics}

\author[ICA]{Andrea Brugnoli\corref{cor}}
\cortext[cor]{Corresponding author}
\ead{andrea.brugnoli@isae-supaero.fr}

\author[KIT]{Philipp \,L. Kinon}
\ead{philipp.kinon@kit.edu}

\author[ISAE]{Francesco Sanfedino}
\ead{francesco.sanfedino@isae-supaero.fr}

\author[KIT]{Peter Betsch}
\ead{peter.betsch@kit.edu}

\author[MARY]{Olivier A. Bauchau}

\address[ICA]{Univ Toulouse, IMT Mines Albi, INSA Toulouse, ISAE-SUPAERO, CNRS, ICA, Toulouse, France}
\address[KIT]{Institute of Mechanics, Karlsruhe Institute of Technology (KIT), Otto-Ammann-Platz 9, 76131 Karlsruhe, Germany}
\address[ISAE]{Féderation ONERA ISAE ENAC, 10 Avenue Marc Pélegrin, Cedex 4, Toulouse, BP-54032, 31055, France}
\address[MARY]{Department of Aerospace Engineering, University of Maryland, College Park, MD, 20742, United States}
\ead{obauchau@umd.edu}

\begin{abstract}
The Reissner–Simo and Hodges models are two equivalent continuous descriptions of finite-strain beam dynamics. The Reissner–Simo formulation uses displacements and rotations, while the Hodges formulation is intrinsic and avoids both variables. Although equivalent in theory, the two approaches behave differently after discretization and offer distinct numerical advantages.\\
In this work, we develop a structure-preserving discretization of the intrinsic formulation. Because the intrinsic equations involve linear differential operators, both kinematic and dynamic boundary conditions can be imposed naturally using mixed finite elements. The resulting formulation also enables multibody systems to be assembled without algebraic constraints, avoiding the stiff differential–algebraic equations typically introduced by kinematic constraints.\\
We demonstrate the approach on different examples, also showing that closed kinematic loops can be modeled without algebraic constraints. The resulting interconnected systems retain a port-Hamiltonian structure, with all nonlinearities confined to the interconnection operator. This structure allows exact energy preservation when combined with implicit midpoint time integration. Furthermore the scheme appear to require less Newton iterations compared to existing energy preserving scheme.
\end{abstract}

\begin{keyword}
Geometrically exact beams \sep Mixed finite elements \sep Port-Hamiltonian systems \sep Structure preserving discretization \sep Multibody Dynamics
\end{keyword}

\end{frontmatter}

\section{Introduction}

Highly flexible beams play a significant role across multiple engineering disciplines. These structures can efficiently absorb vibrations and shocks, reduce structural mass, and allow for compact storage and deployment. Some applications include soft and surgical robotics \cite{albu2008soft,omisore2022}, highly flexible aircrafts \cite{chang2008flight}, wind turbines \cite{chu2020comparative} and deployable space structures \cite{chen2024deployable}. A widely used modeling approach for these structures is the shear-deformable Simo-Reissner formulation \cite{reissner1972,simo1985beam}, which accommodates large rotations and deformations while treating cross sections as rigid. It is well established that this beam model also admits an intrinsic or material representation, where the dynamics can be expressed without explicitly tracking positions or rotations \cite{hodges2003intrinsic}. The interested reader can consult \cite{rodriguez2021control} for a formal proof of the equivalence between the intrinsic and displacement-based formulation. The intrinsic form naturally reveals a port-Hamiltonian (pH) structure \cite{palacios2011nonlinear,artola2021modal,ayala2023energybased}, which can be exploited for state-of-the art simulation and control methodologies. More recently, a pH formulation that equips the displacement-based approach with independent material strain measures has been proposed \cite{kinon2025energy}. This framework further avoids shear locking and employs a simple midpoint time integration scheme that guarantees exact energy and momentum preservation. Another recent contribution \cite{ljukovac2025reissner} discusses yet another Hamiltonian formulation where spatial velocities and strain are simultaneously considered. 
Although the intrinsic formulation offers advantages—such as avoiding rotational parameterizations, its numerical treatment has received comparatively limited attention in the existing literature, see exception~\cite{khaneh2025intrinsic}. \\

A recurring challenge in flexible multibody dynamics is the presence of stiff differential–algebraic equations arising from kinematic constraints between interconnected components \cite{bauchau2011flexible}. In displacement-based formulations, such algebraic constraints are unavoidable because the associated finite element framework does not allow a weak enforcement of the kinematic (i.e. Dirichlet) boundary conditions. This limitation stems from the nonlinear appearance of the differential operators. In contrast, the intrinsic formulation features differential operators that enter the equations linearly, allowing kinematic or dynamic boundary conditions to be applied naturally through an appropriate mixed finite element discretization. Similarly, in the spatial formulation proposed in \cite{ljukovac2025reissner} the differential operator enter linearly and by using different mixed finite element formulations different sets of natural boundary conditions can be considered.  \\

The objective of this contribution is twofold. First, it presents an overview of several port-Hamiltonian formulations of the Simo–Reissner model: 
\begin{itemize} 
\item the spatial–material pH representation proposed in \cite{kinon2025energy};
\item the spatial pH formulation introduced in \cite{ljukovac2025reissner};
\item the intrinsic (or material) formulation originally discussed in \cite{hodges2003intrinsic}. The pH formulation of the Hodges beam was initially noticed in \cite{wynn2013energy}, and then detailed in \cite{artola2021modal,ayala2023energybased}.
\end{itemize}
The equivalence between the classical Simo and Hodges formulation is proven in the PhD thesis by Charlotte Rodriguez \cite{rodriguez2021control}. The list is non exhaustive as more formulations may be added to the list. These formulations exhibit different types of nonlinearities.  Although they are equivalent at the continuous level, they are not equivalent once discretized. 

Second, we demonstrate how the pH intrinsic beam formulation can be discretized in a structure-preserving manner to obtain interconnected
multibody systems without algebraic constraints. To this end, we introduce various mixed finite element discretizations that naturally accommodate different boundary conditions. A key feature common to all these discretizations is the satisfaction of compatibility conditions between the finite element spaces, which eliminates shear-locking effects. Two formulations with opposite input-output behavior (e.g. a free and a clamped beam) can be coupled via a feedback interconnection 
(or in port-Hamiltonian jargon a gyrator interconnection). This provides a great flexibility when assembling multi-body systems and produces discrete systems without Lagrange multipliers. The methodology is explained in two simple cases: a cantilever beam (obtained by interconnecting a clamped and a free beam) and a four bar mechanism. With the latter example, we demonstrate that closed kinematic loops can be modeled without Lagrange multipliers. 

The resulting interconnected systems preserve the pH formulation, where the nonlinearities are restricted to the interconnection operator, whereas the Hamiltonian is quadratic in the state. Thanks to this feature, the energy is preserved by applying the implicit midpoint scheme. While we mostly restrict ourselves to the planar case, the methodology can be extended to spatial problems. The newly proposed method is validated numerically on different benchmarks: the flying spaghetti, a flexible pendulum with a lumped mass attached at the end, an L-shaped frame, and a four-bar mechanism. \\

The paper is organized as follows. In the remainder of this section, we introduce notation and a problem statement. In Sec. \ref{sec:formulations} the different pH formulations of geometrically exact beams are presented. The spatial finite element discretization is then discussed in Sec. \ref{sec:spatial_discretization}. The assembly of multibody systems by feedback interconnection is illustrated in Sec. \ref{sec:interconnection}. Sec. \ref{sec:time_integration} explains the time integration method. Numerical experiments are presented in Sec. \ref{sec:num_examples}.

\subsection{General Notation}

Vectors are written in boldface lowercase $\bm{x} \in \bbR^n$, whereas matrices are written in boldface uppercase $\bm{A}\in \bbR^{m \times n}$. The $i$-th vector of the canonical Euclidean basis is denoted by 
$\bm{b}_i$. A one dimensional domain parametrized by an arc-length is considered, i.e. $s \in \Omega=[0, L]$. The inner product of scalar or vector-valued quantities is denoted by
$$
\inpr[\Omega]{f}{g} := \int_\Omega f g \, \d{s}, \qquad 
\inpr[\Omega]{\bm{f}}{\bm{g}} := \int_\Omega \bm{f} \cdot \bm{g} \, \d{s}.
$$
Given the one-dimensional nature of the models considered in this work, the inner product on the boundary reads
$$
\dualpr[\partial\Omega]{f}{g} := f(L)g(L) - f(0)g(0), \qquad 
\dualpr[\partial\Omega]{\bm{f}}{\bm{g}} := \bm{f}(L) \cdot \bm{g}(L) - \bm{f}(0) \cdot \bm{g}(0).
$$

Differential matrix operators involving derivatives are denoted by calligraphic uppercase letters $\mathcal{A}$. Matrix transposition is denoted by $\bm{A}^{\top}$ and the formal adjoint of an operator by $\mathcal{A}^*$. Let $\bm{x}, \bm{y} \in C_c^\infty(\Omega)$ be smooth (possibly vector-valued) compactly supported fields. The formal adjoint is defined via integration by parts as follows
$$
\inpr[\Omega]{\mathcal{A} \bm{x}}{\bm{y}} = \inpr[\Omega]{\bm{x}}{\mathcal{A}^* \bm{y}}.
$$
Matrix--vector multiplication and the action of an operator are denoted similarly as $\bm{Ax}$ and $\mathcal{A} \bm{x}$. A short bracket notation is used to express $ (\bm{x}, \bm{y}) := [\bm{x}^\top,\, \bm{y}^\top]^\top$, and analogously for more that two variables. Identity matrices are represented by $\bm I$ and matrices full of zeros by $0$, their dimension being clear from the context. Numerical realization of vector and matrix variables after finite element discretization are denoted using bold math roman fonts $\mathbf{x}, \; \mathbf{A}$. The symbol $\square$ represents a placeholder. The partial derivative of a function $f$ is denoted by $\partial_{\square} f$.
To represent the cross product between two vectors $\bm{x}, \; \bm{y}$ in $\bbR^2$ we introduce the skew-symmetric matrix $\bm{S} \in \mathbb{R}^{2\times 2}$ such that \cite{kinon2025energy}
\begin{equation}
    \label{eq_Skew_mat}
\bm{x} \times \bm{y} = x_1 y_2 - x_2 y_1 = \bm{x}^\top \bm{S} \bm{y}, \qquad 
\bm{S} = \begin{bmatrix}
    0 & 1 \\
    -1 & 0
\end{bmatrix}.
\end{equation}

\subsection{Problem statement}

\paragraph{Material and spatial configurations}
Consider a material beam configuration $\Omega = [0,L]$, where $L \in \mathbb{R}$ denotes the beam length. The term \textit{material} indicates that quantities are expressed through their components with respect to a frame attached to the beam cross-section. For simplicity, the stress-free configuration of the beam is assumed to be a straight line; an initially curved configuration may be incorporated through an appropriate modification of the formulation.

The motion of the centerline is described by the position vector $\bm{r}(s,t) \in \Omega_t \subset \mathbb{R}^2$. The independent variables are the arc-length coordinate $s \in \Omega$, which parameterizes the centerline of the straight, stress-free initial configuration $\Omega_0 = {\bm{r}(s, 0) = s\bm{b}_1 \mid s \in \Omega}$, and time $t \in [0,T]$. The cross-sections remain planar throughout the motion, and their orientation is characterized by an angle $\theta : \Omega \times [0,T] \rightarrow \mathbb{R}$. 
In addition to the material representation, a \textit{spatial} configuration may also be introduced. In this setting, spatial quantities are expressed in an inertial frame. Accordingly, the spatial notation $\bm{x}$ and its material counterpart $\mat{x}$ are vectors in $\mathbb{R}^2$ representing the components of the same geometric entity. These are distinguished by a hat and are related by
$$
\bm{x} = \bm{\Lambda}(\theta) \mat{x}, \qquad \bm{\Lambda}(\theta) \in \mathrm{SO}(2),
$$
where the rotation matrix reads
$$
\bm{\Lambda}(\theta(s, t)) = \begin{bmatrix}
    \cos \theta(s, t) & - \sin \theta(s, t) \\
    \sin \theta(s, t) & \cos \theta(s, t)
\end{bmatrix}.
$$
The time derivative of the rotation matrix gives
\begin{equation}\label{eq:derivative_rot}
    \partial_t\bm{\Lambda} = \omega \bm{S}^\top \bm{\Lambda} = - \omega \bm{S}\bm{\Lambda}, \qquad \omega:=\partial_t {\theta},
\end{equation}
where $\bm S$ has been introduced in \eqref{eq_Skew_mat}.

\paragraph{Geometric and physical parameters}
An isotropic homogeneous material is considered with cross section area $A$ and second moment of area $I$, mass density $\rho$, Young's modulus $E$, shear modulus $G$ and shear correction factor $k$.
Later in this work, we make use of the so-called compliance form, meaning that the dynamics is written directly in terms of the stresses \cite{brugnoli2025nonlinear,dejong2026decomposition,kinon2026mixed,herrmann2026mixed,greco_2024_objective,kim_1998_new,santos_2010_hybrid,thoma_2024_velocitystress}. To this aim let us introduce the translational and rotational compliance 
$$
\bm{C}_t = \begin{bmatrix}
    (EA)^{-1} & 0 \\
  0  & (kGA)^{-1}
\end{bmatrix} = \mathrm{Diag}\begin{bmatrix}
    EA \\ 
    kGA
\end{bmatrix}^{-1}, \qquad {C}_r = (EI)^{-1},
$$
where we have introduced the operator of diagonal matrices. In the following, we use this notation as well for block-diagonal matrices.
For simplicity, the material parameters are considered constant along the beam axis. However, non-constant coefficients would not introduce any additional difficulty.

\paragraph{Kinematic quantities}
The spatial and material linear velocities, denoted by $\bm{v}$ and $\mat{v}$, are related through $\bm{v} = \bm{\Lambda}\mat{v}$. The associated spatial and material linear momenta, $\bm{\pi}_v$ and $\mat{\pi}_v$, follow the same transformation and are linked to the velocities by $\bm{\pi}_v = \rho A \bm{v}, \; \mat{\pi}_v = \rho A \mat{v}$. \\
Furthermore, we consider the angular velocity $\omega$ and the corresponding angular momentum $\pi_\omega$. In the planar setting, spatial and material representations coincide. Their relation is given by $\pi_\omega = \rho I \omega$.

\paragraph{Strains and stresses}
The material axial and shear strain $\mat{\gamma}$ are defined kinematically as
$$
\mat{\gamma} =  \bm{\Lambda}^\top \partial_s \bm{r} - \bm{b}_1,
$$
with their spatial counterparts obtained via the transformation $\bm{\gamma} = \bm{\Lambda}\mat{\gamma}$. The corresponding stress resultants in material and spatial form, denoted by $\mat{n}$ and $\bm{n}$, satisfy $\bm{n} = \bm{\Lambda}\mat{n}$. The constitutive relations linking strains and stress resultants are given by
\[
\mat{\gamma} = \bm{C}_t \mat{n}, \qquad \bm{\gamma} = \bm{\Lambda}\bm{C}_t \bm{\Lambda}^\top \bm{n}.
\]
Finally, we consider the curvature $\kappa$ defined kinematically as $\kappa = \partial_s \theta.$ The bending moment $m$ is then given by $\kappa = C_r m.$

\section{Port-Hamiltonian formulations of planar geometrically exact beams}\label{sec:formulations}

In this section we present different (port-)Hamiltonian formulations for planar geometrically exact beams, that have already been presented in the literature. The purpose is to show how nonlinearities enter the dynamics and how they affect the subsequent discretization strategy. To the best of our knowledge, a comprehensive discussion on the different formulations was not presented before. Three distinct formulations will be examined. Additional variants may be derived by switching the representation of individual variables between spatial and material descriptions, and vice versa. Nevertheless, analyzing these three cases is sufficient to highlight the principal differences in how nonlinearities are handled. Note additionally, that in the continuous setting, i.e., before discretization, those formulations are equivalent. But when discretized in space and/or time, certain advantages and disadvantages can be found. \\

For the subsequent discussion on the spatial discretization in Sec. \ref{sec:spatial_discretization}, only the material (or intrinsic) model is considered. One interesting feature of this model is that it is represented by a semi-linear PDE, meaning that the differential operators enter the formulation in a linear way. The (quadratic) nonlinearities are confined to an algebraic term. This particular structure opens up many possibilities for the spatial discretization. In particular it will be shown that different mixed finite element formulations can be employed, leading to distinct representations of the boundary conditions. 

\subsection{Spatial-material formulation}
In this formulation, the spatial velocities are used whereas the stresses are written in the material formulation. The governing PDEs read (cf. \cite{kinon2025energy} for more details)

\begin{equation}\label{eq:spatial_material}
\begin{aligned}
\pdv{}{t}
\begin{pmatrix}
\bm{r} \\
\theta \\
\end{pmatrix} &=
\begin{pmatrix}
    \bm{v} \\
    \omega
\end{pmatrix}, \\
\mathrm{Diag}
\begin{bmatrix}
\rho A \\
\rho I \\
\bm{C}_t \\
{C}_r
\end{bmatrix}
\pdv{}{t}
\begin{pmatrix}
\bm{v} \\
\omega \\
\mat{n} \\
m
\end{pmatrix}
&=
\begin{bmatrix}
0 & 0 & \partial_s (\bm{\Lambda}(\theta)\, \square) & 0 \\
0 & 0  & \partial_s \bm{r}^\top \bm{S}\bm{\Lambda}(\theta)(\square) & \partial_s(\square) \\
\bm{\Lambda}(\theta)^\top \partial_s (\square) & \bm{\Lambda}(\theta)^\top \bm{S} \partial_s \bm{r}(\square)  & 0 & 0 \\
0 & \partial_s(\square) & 0 & 0
\end{bmatrix}
\begin{pmatrix}
\bm{v} \\
\omega \\
\mat{n} \\
m
\end{pmatrix},
\end{aligned}
\end{equation}
or compactly
\begin{equation} \label{eq_ph_sm_inter}
\begin{aligned}
\partial_t \bm{q} &= \bm{P}\bm{e}_{\rm sm}, \\
\bm{M} \partial_t \bm{e}_{\rm sm} &= \mathcal{J}_{\rm sm}^{\bm{e}}(\bm{q}) \bm{e}_{\rm sm},
\end{aligned}
\end{equation}
where $\bm{M}$ is a symmetric positive definite matrix, i.e., $\bm{M}=\bm{M}^\top \succ 0$. Moreover, $\mathcal{J}_{\rm sm}$ is a formally skew-adjoint operator, satisfying $\mathcal{J}^{\bm{e}}_{\rm sm} = -(\mathcal{J}^{\bm{e}}_{\rm sm})^*$ (we refer to \cite{kinon2025energy} for a formal proof). Matrix $\bm{P}$ is a projection (matrix) on the first two variables given by
$$
\bm{P} = \begin{bmatrix}
    \bm{I} &  0 &  0 &  0 \\
      0 & 1 &  0 & 0
\end{bmatrix} .
$$
Additionally, we call $\bm{q} = (\bm{r}, \; \theta)$ the configuration vector and $\bm{e}_{\rm sm} = (\bm{v}, \; \omega, \; \mat{n}, \; m)$ the energy variables. A (port-)Hamiltonian formulation is then obtained as follows \cite{morandin2019descriptor}
\begin{subequations}\label{eq:pH_sm}
\begin{align}
 \bm{E}\partial_t \bm{x}_{\rm sm} &= \mathcal{J}_{\rm sm}(\bm{x}_{\rm sm}) \bm{z}_{\rm sm}, \label{eq:pH_sm_1} \\
    \bm{E}^\top \bm{z}_{\rm sm} &= \fdv{H_{\rm sm}}{\bm{x}_{\rm sm}} = \bm{E}^\top \bm{Q} \bm{x}_{\rm sm}, \label{eq:pH_sm_2}
\end{align}
\end{subequations}
where the second identity makes use of the variational derivative.
Further, the state vector is $\bm{x}_{\rm sm} = (\bm{q}, \bm{e}_{\rm sm})$ and the matrices $\bm{E}, \bm{Q}$ and operator $\mathcal{J}_{\rm sm}$ read
$$
\bm{E} =\begin{bmatrix}
    \bm{I} & 0 \\
     0 & \bm{M}
\end{bmatrix}, \qquad
\bm{Q} =\begin{bmatrix}
     0 &  0 \\
     0 & \bm{I}
\end{bmatrix},
\qquad 
\mathcal{J}_{\rm sm}(\bm{x}_{\rm sm}) = \begin{bmatrix}
     0 & \bm{P} \\
    -\bm{P}^\top & \mathcal{J}_{\rm sm}^{\bm{e}}(\bm{q})
\end{bmatrix}.
$$
The total energy, or Hamiltonian, of the model is the following quadratic form 
$$
\begin{aligned}
   H_{\rm sm} &= \frac{1}{2}\int_\Omega  (\rho A \bm{v}^\top \bm{v} + \rho I \omega^2 +  \mat{n}^\top \bm{C}_t \mat{n} + C_r m^2) \; \d{s}, \\
   &= \frac{1}{2}\int_\Omega \bm{e}_{\rm sm}^\top \bm{M}\bm{e}_{\rm sm} \; \d{s} = \frac{1}{2}\int_\Omega \bm{x}_{\rm sm}^\top \bm{E}^\top \bm{Q}\bm{x}_{\rm sm} \; \d{s}, \\
\end{aligned}
$$
The time derivative of the Hamiltonian provides
$$
\begin{aligned}
\dot{H}_{\rm sm} = \inpr[\Omega]{\delta_{\bm{x}_{\rm sm}}H_{\rm sm}}{\partial_t \bm{x}_{\rm sm}},
\overset{\eqref{eq:pH_sm_2}}{=} \inpr[\Omega]{\bm{E}^\top \bm{z}_{\rm sm}}{\partial_t \bm{x}_{\rm sm}}, 
= \inpr[\Omega]{\bm{z}_{\rm sm}}{\bm{E} \partial_t \bm{x}_{\rm sm}}, 
\overset{\eqref{eq:pH_sm_1}}{=}  \inpr[\Omega]{\bm{z}_{\rm sm}}{\mathcal{J}_{\rm sm}(\bm{x}_{\rm sm}) \bm{z}_{\rm sm}}.
\end{aligned}
$$
This result is a consequence of the general descriptor form presented in Eq. \eqref{eq:pH_sm}. The algebraic part of the operator $\mathcal{J}_{\rm sm}$ vanishes due to the skew-symmetry. Expanding the differential operator provides
$$
\begin{aligned}
\dot{H}_{\rm sm} &=  \inpr[\Omega]{\bm{z}_{\rm sm}}{\mathcal{J}_{\rm sm}(\bm{x}_{\rm sm}) \bm{z}_{\rm sm}}, \\
&= \inpr[\Omega]{\bm{v}}{\partial_s (\bm{\Lambda}(\theta)\, \mat{n}) } + \inpr[\Omega]{\omega}{\partial_s m} + \inpr[\Omega]{\mat{n}}{\bm{\Lambda}(\theta)^\top\partial_s \bm{v}} + \inpr[\Omega]{m}{\partial_s \omega} , \\
&= \dualpr[\partial\Omega]{\bm{v}}{\bm{\Lambda}(\theta)\mat{n}} + \dualpr[\partial\Omega]{\omega}{m}.    
\end{aligned}
$$
Therein, we have used integration by parts.
The formal skew-adjointness of $\mathcal{J}_{\rm sm}$ becomes clear now: the temporal variation of the Hamiltonian is caused solely by interactions with the external environment at the boundaries.

Note that the parameters in the matrix $\bm M$ can be set to zero. This leads to an infinite-dimensional pH DAE formulation, which allows the inclusion of massless terms as well as infinitely stiff behaviors, including a shear-stiff representation consistent with a nonlinear Kirchhoff beam \cite{meier2019kirchhoff}. In that case, the corresponding stress resultant acts as a Lagrange multiplier enforcing the constraint in time-differentiated form \cite{kinon2026mixed}. Furthermore, it can be seen from relation \eqref{eq:spatial_material} that the interconnection operator $\mathcal{J}_{\rm sm}(\bm{q})$ depends on both the position and rotation angle.

\begin{remark}
In the previous explanations we have assumed that the Hamiltonian does not dependent on the configuration $\bm q$. However,
if a configuration-dependent potential $U(\bm{q})$ has to be considered, its variational derivatives $\delta_{\bm{r}} {U}, \; \delta_{\theta} {U}$ allows expressing the associated forces and torques. This point will be explained in more detail in Sec. \ref{sec:material}.    
\end{remark}

 \subsection{Spatial formulation}

The spatial pH formulation has been recently presented \cite{ljukovac2025reissner}. Since the stress variables are written in the spatial frame, the constitutive law and consequently the Hamiltonian functional feature rotation matrices.  The governing equations for this case read
\begin{equation} \label{eq_pH_s_orig}
\begin{aligned}
\pdv{}{t}
\begin{pmatrix}
\bm{r} \\
\theta \\
\end{pmatrix} &=
\begin{pmatrix}
    \bm{v} \\
    \omega
\end{pmatrix}, \\
\mathrm{Diag}
\begin{bmatrix}
\rho A \\
\rho I \\
\bm{\Lambda}\bm{C}_t\bm{\Lambda}^\top \\
{C}_r
\end{bmatrix}
\begin{pmatrix}
\partial_t \bm{v} \\
\partial_t \omega \\
\overset{\nabla}{\bm{n}} \\
\partial_t m
\end{pmatrix}
&=
\begin{bmatrix}
0 & 0 & \partial_s  & 0 \\
0 & 0  & \partial_s \bm{r}^\top \bm{S} & \partial_s \\
\partial_s & \bm{S}\partial_s \bm{r} & 0 & 0 \\
0 & \partial_s  & 0 & 0
\end{bmatrix}
\begin{pmatrix}
\bm{v} \\
\omega \\
\bm{n} \\
m
\end{pmatrix}.
\end{aligned}
\end{equation}
The notation $\overset{\nabla}{\bm{n}} $ denotes the Lie derivative of the spatial variable $\bm{n}$. Its expression is given by 
\begin{equation}\label{eq:Lie_derivative}
    \overset{\nabla}{\bm{n}} = \bm{\Lambda}\partial_t(\bm{\Lambda}^\top \bm{n})= \partial_t \bm{n} + \omega \bm{S}\bm{n}.   
\end{equation}
The dynamics can be written in a compact form as 
$$
\begin{aligned}
    \partial_t \bm{q} &= \bm{P} \bm{e}_{\rm s}, \\
    \bm{M}_{\rm s}(\theta)\partial_t \bm{e}_{\rm s} &=\mathcal{J}^{\bm{e}}_{\rm s}(\bm{r}) \bm{e}_{\rm s},
\end{aligned}
$$
where, with a slight abuse of notation, the Lie derivative is denoted by a partial one. The total energy, or Hamiltonian, of the model is the following expression
$$
H_{\rm s} = \frac{1}{2} \int_\Omega  (\rho A \bm{v}^\top \bm{v} + \rho I \omega^2 +  \bm{n}^\top \bm{\Lambda} \bm{C}_t \bm{\Lambda}^\top \bm{n} + C_r m^2) \; \d{s} = \frac{1}{2} \int_\Omega \bm{e}_{\rm s}^\top \bm{M}_{\rm s}(\theta) \bm{e}_{\rm s} \; \d{s}.
$$
Notice that it is not a quadratic form in the variable because of the presence of the rotation matrix~$\bm{\Lambda}$.  The time derivative of the Hamiltonian provides

\begin{equation} \label{eq_ham_s}
\dot{H}_{\rm s} =\int_\Omega \bm{v} \cdot \partial_t \bm{\pi}_v + \omega \partial_t\pi_\omega + \bm{n} \cdot \overset{\nabla}{\bm{\gamma}} + m\partial_t \kappa \; \d{s}=\dualpr[\partial\Omega]{\bm{v}}{\bm{n}} + \dualpr[\partial\Omega]{\omega}{m}.    
\end{equation}

Notice that the operator $\mathcal{J}_{\rm s}^{\bm{e}}(\bm{r})$ only depends on the centerline position and not on the angle $\theta$.

\paragraph{Descriptor port-Hamiltonian form}
The previous formulation can be written as follows
\begin{equation}
\mathrm{Diag}
\begin{bmatrix}
\bm{I} \\
1 \\
\rho A \\
\rho I \\
\bm{\Lambda}\bm{C}_t\bm{\Lambda}^\top \\
{C}_r
\end{bmatrix}
\begin{pmatrix}
\partial_t \bm{r} \\
\partial_t \theta \\
\partial_t \bm{v} \\
\partial_t \omega \\
\overset{\nabla}{\bm{n}} \\
\partial_t m
\end{pmatrix}
=
\begin{bmatrix}
0 & 0 & \bm{I} & 0 & 0 & 0 \\
0 & 0 & 0 & 1 & 0 & 0 \\
- \bm{I} & 0 & 0 & 0 & \partial_s  & 0 \\
0 & -1 & 0 & 0  & \partial_s \bm{r}^\top \bm{S} & \partial_s \\
0 & 0 & \partial_s & \bm{S}\partial_s \bm{r} & 0 & 0 \\
0 & 0 & 0 & \partial_s  & 0 & 0
\end{bmatrix}
\begin{pmatrix}
0 \\
0 \\
\bm{v} \\
\omega \\
\bm{n} \\
m
\end{pmatrix}.
\end{equation}
Because of the appearance of the Lie derivative, the formulation is not in standard port-Hamiltonian form, see, e.g., \eqref{eq:pH_sm}. To find the port-Hamiltonian representation we start from the expression of the Hamiltonian \eqref{eq_ham_s}
$$
H_s = \frac{1}{2}\int_\Omega \bm{e}_{\rm s}^\top \bm{M}_{\rm s}(\theta) \bm{e}_{\rm s} \; \d{s},
$$
The variational derivative of the Hamiltonian with respect to the state is given by
$$
    \fdv{H_{\rm s}}{\bm r} = 0, \qquad
    \fdv{H_{\rm s}}{\theta} = \bm{n}^\top \bm{S}^\top \bm{\Lambda}\bm{C}_t\bm{\Lambda}^\top\bm{n}, \qquad
    \fdv{H_{\rm s}}{\bm{e}_{\rm s}} = \bm{M}_{\rm s}(\theta) \bm{e}_{\rm s}, \\
$$
where the second derivative arises from $\partial_\theta (\bm{n}^\top \bm{\Lambda}\bm{C}_t\bm{\Lambda}^\top \bm{n}) = \bm{n}^\top \bm{S}^\top \bm{\Lambda}\bm{C}_t\bm{\Lambda}^\top \bm{n}.$
In the next step,  a corrected skew-adjoint operator has to be defined to account for the contribution of the aforementioned term  and the additional Lie derivative term
 $$\displaystyle \fdv{H_{\rm s}}{\theta} = \bm{n}^\top \bm{S}^\top \bm{\Lambda}\bm{C}_t\bm{\Lambda}^\top\bm{n}, \qquad  \overset{\nabla}{\bm{n}}= \partial_t \bm{n} + \omega \bm{S}\bm{n}.$$
Given these two aspects, the corrected operator reads
$$
\mathcal{J}_{\rm s}(\bm{x}_{\rm s}) =
\begin{bmatrix}
0 & 0 & \bm{I} & 0 & 0 & 0 \\
0 & 0 & 0 & 1 & 0 & 0 \\
- \bm{I} & 0 & 0 & 0 & \partial_s  & 0 \\
0 & -1 & 0 & 0  & \partial_s \bm{r}^\top \bm{S} + \bm{G} & \partial_s \\
0 & 0 & \partial_s & \bm{S}\partial_s \bm{r} - \bm{G}^\top & 0 & 0 \\
0 & 0 & 0 & \partial_s  & 0 & 0
\end{bmatrix}, \qquad\qquad \bm{G} := \bm{n}^\top \bm{S}^\top \bm{\Lambda}\bm{C}_t\bm{\Lambda}^\top.
$$
The operator $\mathcal{J}_{\rm s}$ is again formally skew adjoint. Given the overall state $\bm{x}_{\rm s} = \begin{pmatrix} \bm{r}, \theta, \bm{v}, \omega, \bm{n}, m\end{pmatrix},$ the dynamics for the augmented system can be written in the following canonical Hamiltonian descriptor form \cite{morandin2019descriptor}
$$
\begin{aligned}
\bm{E}_{\rm s}(\bm{x}_{\rm s}) \partial_t \bm{x}_{\rm s} &= \mathcal{J}_{\rm s}(\bm{x}_{\rm s})  \bm{z}_{\rm s}, \\
\bm{E}_{\rm s}^\top \bm{z}_s &= \fdv{H_{\rm s}}{\bm{x}_{\rm s}} .
\end{aligned}
$$
In this case the variational derivative of the Hamiltonian contains nonlinear terms.

\subsection{Material formulation}\label{sec:material}

This formulation is also called intrinsic in the literature. The term intrinsic, used by Hodges in his seminal work \cite{hodges2003intrinsic}, refers to the fact that the displacement and rotation do not appear in the unforced dynamics and can be reconstructed a posteriori from the knowledge of the linear and angular velocity. Since this formulation is the one that we will consider later for discretization and simulation, we will detail the port-Hamiltonian formulation and the incorporation of additional potential like gravity. The equations are given in the following form

\begin{equation} 
\begin{aligned}
\pdv{}{t}
\begin{pmatrix}
    \bm{r} \\
    \theta
\end{pmatrix} &= 
\begin{bmatrix}
    \bm{\Lambda}(\theta) & 0 \\
    0 & 1
\end{bmatrix}
\begin{pmatrix}
    \mat{v} \\
    \omega
\end{pmatrix}, \\
\mathrm{Diag}
\begin{bmatrix}
\rho A \\
\rho I \\
\bm{C}_t\\
C_r
\end{bmatrix}
\pdv{}{t}
\begin{pmatrix}
\mat{v} \\
\omega \\
\mat{n} \\
m
\end{pmatrix}
&=
\left(
\begin{bmatrix}
0 &  0 & \partial_s & 0 \\
0 & 0 & \bm{b}_2^\top  & \partial_s \\
\partial_s & -\bm{b}_2 & 0 & 0 \\
0 & \partial_s  & 0 & 0
\end{bmatrix}
+ 
\begin{bmatrix}
0 & \bm{S} \mat{\pi}_V & \bm{S}^\top \kappa & 0 \\
\mat{\pi}_V^\top \bm{S} & 0 & \mat{\gamma}^\top \bm{S}  & 0 \\
\bm{S}^\top \kappa & \bm{S}\mat{\gamma}  & 0 & 0 \\
0 & 0 & 0 & 0
\end{bmatrix}
\right)
\begin{pmatrix}
\mat{v} \\
\omega \\
\mat{n} \\
m
\end{pmatrix}.
\end{aligned} \label{eq_ph_inter_m}
\end{equation}
where $\bm{b}_2 =(0,1)$ is the second element of the Euclidean canonical basis and it arises from $\bm{S}\bm{b}_1 = -\bm{b}_2$. Unlike in the previous two formulations, here, the differential operator $\mathcal{J}_{\rm m}^{\bm{e}}$ can be split into a constant differential part and a state modulated algebraic (matrix) part 
$$\mathcal{J}_{\rm m}^{\bm{e}}(\bm{e}_{\rm m}) = \mathcal{J}_{\rm m, d}^{\bm{e}} + \bm{J}_{\rm m, a}^{\bm{e}}(\bm{e}_{\rm m}), \qquad \bm{e}_{\rm m}=(\mat{v}, \, \omega, \, \mat{n}, \, m),
$$ 
defined as
\begin{equation}
\mathcal{J}_{\rm m, d}^{\bm{e}}:=\begin{bmatrix}
0 &  0 & \partial_s & 0 \\
0 & 0 & \bm{b}_2^\top   & \partial_s \\
\partial_s & -\bm{b}_2 & 0 & 0 \\
0 & \partial_s  & 0 & 0
\end{bmatrix}, 
\qquad 
\bm{J}_{\rm m, a}^{\bm{e}}(\bm{e}_m):=\begin{bmatrix}
0 & \bm{S} \mat{\pi}_v & \bm{S}^\top \kappa & 0 \\
\mat{\pi}_v^\top \bm{S} & 0 & \mat{\gamma}^\top \bm{S}  & 0 \\
\bm{S}^\top \kappa & \bm{S}\mat{\gamma}  & 0 & 0 \\
0 & 0 & 0 & 0
\end{bmatrix}.
\end{equation}
The system can be compactly written as 
$$
\begin{aligned}
\partial_t \bm{q} &= \bm{P}_{\rm m}(\theta)\bm{e}_{\rm m}, \\
\bm{M} \partial_t \bm{e}_{\rm m} &= \mathcal{J}_{\rm m}^{\bm{e}}(\bm{e}_{\rm m})\bm{e}_{\rm m}.
\end{aligned}
$$
where the different quantities are defined through comparison with \eqref{eq_ph_inter_m}.
This leads to the port-Hamiltonian formulation
\begin{equation}
\begin{aligned}
    \bm{E}\partial_t \bm{x}_{\rm m} &= \mathcal{J}_{\rm m}(\bm{x}_{\rm m}) \bm{z}_{\rm m}, \\
    \bm{E}^\top \bm{z}_{\rm m} &= \fdv{H_{\rm m}}{\bm{x}_{\rm m}} = \bm{E}^\top \bm{Q} \bm{x}_{\rm m},
\end{aligned}
\end{equation}
where the state vector is $\bm{x}_{\rm m} = (\bm{q},\bm{e}_{\rm m})$ and the matrices $\bm{E}, \bm{Q}$ and operator $\mathcal{J}_{\rm m}$ read
$$
\bm{E} =\begin{bmatrix}
    \bm{I} &  0 \\
     0 & \bm{M}
\end{bmatrix}, \qquad
\bm{Q} =\begin{bmatrix}
    0 &  0 \\
     0 & \bm{I}
\end{bmatrix},
\qquad 
\mathcal{J}_{m} = \begin{bmatrix}
     0 & \bm{P}_{\rm m}(\theta) \\
    -\bm{P}_{\rm m}^\top(\theta) & \mathcal{J}_{\rm m}^{\bm{e}}(\bm{e}_{\rm m})
\end{bmatrix}.
$$
Once again the operator $\mathcal{J}_{\rm m}$ can be split into a constant differential part and a state modulated matrix 
$$\mathcal{J}_{\rm m}(\bm{x}_{\rm m}) = \mathcal{J}_{\rm m, d} + \bm{J}_{\rm m, a}(\bm{x}_{\rm m}),$$ defined as follows
$$
\mathcal{J}_{\rm m, d} = \begin{bmatrix}
    0 & 0 \\
    0 & \mathcal{J}_{\rm m, d}^{\bm{e}}
\end{bmatrix}, \qquad 
\bm{J}_{\rm m, a}(\bm{x}_{\rm m}) = \begin{bmatrix}
    0 & \bm{P}_{\rm m}(\theta) \\
    -\bm{P}_{\rm m}^\top(\theta) & \bm{J}_{\rm m, a}^{\bm{e}} (\bm{e}_{\rm m})
\end{bmatrix}.
$$
The total energy of the system is given by
$$
H_{\rm m}= \frac{1}{2}\int_\Omega  (\rho A \mat{v}^\top\mat{v} + \rho I \omega^2 +  \mat{n}^\top \bm{C}_t \mat{n} + C_r m^2) \; \d{s} = \frac{1}{2}\int_\Omega \bm{e}_{\rm m}^\top \bm{M}\bm{e}_{\rm m} \; \d{s}.
$$
Since $\mathcal{J}_{\rm m}$ is formally skew-adjoint, the time derivative of the Hamiltonian provides
$$
\dot{H}_{\rm m} = \dualpr[\partial\Omega]{\mat{v}}{\mat{n}} + \dualpr[\partial\Omega]{\omega}{m}.
$$
One can notice that the differential operator $\mathcal{J}_{\rm m, d}$ is linear. Different mixed variational formulations can be constructed, resulting in different natural boundary conditions. This specificity of the intrinsic model 
distinguishes it from the two previously discussed formulations for the following reasons: 
\begin{itemize}
    \item Because of its quasilinear nature, a weak formulation for the spatial-material formulation necessarily relies on the use of natural Neumann boundary conditions.
    \item For the spatial formulation, the energy directly depends on the geometrical configuration via the rotation angle, thus making it difficult to preserve the energy.
\end{itemize}
For this reasons, we prefer the intrinsic formulation and disregard the other two representations in the remainder of this work.

\paragraph{The linear case}
The linear case is governed by the following PDEs
\begin{equation}\label{eq:linear_system}
\mathrm{Diag}
\begin{bmatrix}
\rho A \\
\rho I \\
\bm{C}_t\\
C_r
\end{bmatrix}
\pdv{}{t}
\begin{pmatrix}
\mat{v} \\
\omega \\
\mat{n} \\
m
\end{pmatrix}
=
\begin{bmatrix}
 0 & 0 & \partial_s & 0 \\
 0& 0 & \bm{b}_2^\top & \partial_s \\
 \partial_s & -\bm{b}_2  & 0 & 0 \\
 0 & \partial_s & 0 & 0
\end{bmatrix}
\begin{pmatrix}
\mat{v} \\
\omega \\
\mat{n} \\
m
\end{pmatrix} \quad \Leftrightarrow \quad \bm{M}\partial_t \bm{e}_{\rm m} = \mathcal{J}_{\rm m, d}^{\bm e} \bm{e}_{\rm m}.
\end{equation}
This system is equivalent to the Timoshenko beam model \cite{macchelli2004timoshenko}. 
In the linear regime, the material and spatial version of a vector coincide.

\paragraph{Incorporation of the gravity potential}

The gravity potential, with gravity acceleration $g$, is written using the centerline vertical position $r_y=\bm{r}^\top\bm{b}_2$ as follows
\begin{equation} \label{eq:gravity_potential}
   U = \int_\Omega \rho A g r_y \, \d s.
\end{equation}
The gravity force per unit reference length is then obtained as $\bm{g} = - \fdv{U}{\bm{r}}$. The overall Hamiltonian system is written 
$$
\begin{bmatrix}
    \bm{I} & 0 \\
    0 & \bm{M}
\end{bmatrix} \pdv{}{t}
\begin{pmatrix}
    \bm{q} \\ \bm{e}_{\rm m}
\end{pmatrix} = 
\begin{bmatrix}
    0 & \bm{P}_{\rm m}(\theta) \\
    -\bm{P}_{\rm m}^\top(\theta) & \mathcal{J}_{\rm m}^{\bm{e}}(\bm{e}_{\rm m})
\end{bmatrix}
\begin{pmatrix}
    \delta_{\bm{q}} U \\
    \bm{e}_{\rm m}
\end{pmatrix}.
$$

Note that the effect of gravity is nonlinear in the material formulation as the force has to be rotated in the material frame.

\section{Spatial discretization of the material formulation}\label{sec:spatial_discretization}

The discretization will be explained only for the linear part, as the differential operators in the intrinsic formulation appear only linearly. The discretization of the full nonlinear system \eqref{eq_ph_inter_m} will be briefly explained later on. As all nonlinear terms are of algebraic nature, their finite element discretization boils down to a projection onto the finite element spaces. The weak form of the linear system \eqref{eq:linear_system} is given by
\begin{equation} \label{eq_weak_form}
\begin{aligned}
    \inpr[\Omega]{\bm{\psi}_v}{\rho A \partial_t \mat{v}} &= \inpr[\Omega]{\bm{\psi}_v}{\partial_s \mat{n}}, \\
    \inpr[\Omega]{{\psi}_{\omega}}{\rho I \partial_t \omega} &= \inpr[\Omega]{{\psi}_{\omega}}{\bm{b}_2^\top \mat{n}} + \inpr[\Omega]{{\psi}_{\omega}}{\partial_s m}, \\
    \inpr[\Omega]{\bm{\psi}_n}{\bm{C}_t \partial_t \mat{n}} &= -\inpr[\Omega]{\bm{\psi}_n}{\bm{b}_2 \omega} + \inpr[\Omega]{\bm{\psi}_n}{\partial_s \mat{v}}, \\
    \inpr[\Omega]{{\psi}_m}{C_r \partial_t m} &= \inpr[\Omega]{{\psi}_m}{\partial_s \omega}, \\    
\end{aligned}
\end{equation}
where ${\psi}_\square, \; \bm{\psi}_\square$ are appropriate test functions. Different integration by parts lead to different boundary terms. In particular, performing integration by parts on the first two equations allows for the imposition of prescribed forces and moments. This coincides with the common procedure in computational mechanics. Using the second two equations instead, one is able to naturally impose prescribed velocities on the boundary without using Lagrange multipliers. In pH terminology, one speaks of different causalities in the final system. This methodology has been detailed in \cite{cardoso2020pfem,brugnoli2022dualfield}.

\subsection{Free-free boundary conditions} \label{sec:spatial_discretization_freefree}

If the first two lines of \eqref{eq_weak_form} are integrated by parts then one obtains
\begin{equation} \label{eq_weak_freefree}
\begin{aligned}
    \inpr[\Omega]{\bm{\psi}_v}{\rho A \partial_t \mat{v}} &= - \inpr[\Omega]{\partial_s  \bm{\psi}_v}{\mat{n}} + \dualpr[\partial \Omega]{\bm{\psi}_v}{\mat{n}}, \\
    \inpr[\Omega]{{\psi}_{\omega}}{\rho I \partial_t \omega} &= +\inpr[\Omega]{{\psi}_{\omega}}{\bm{b}_2^\top \mat{n}} - \inpr[\Omega]{\partial_s {\psi}_{\omega}}{m} + \dualpr[\partial \Omega]{{\psi}_{\omega}}{m}, \\
    \inpr[\Omega]{\bm{\psi}_n}{\bm{C}_t \partial_t \mat{n}} &=- \inpr[\Omega]{\bm{\psi}_n}{\bm{b}_2 \omega} + \inpr[\Omega]{\bm{\psi}_n}{\partial_s \mat{v}}, \\
    \inpr[\Omega]{{\psi}_m}{C_r \partial_t m} &= +\inpr[\Omega]{{\psi}_m}{\partial_s \omega}. 
\end{aligned}
\end{equation}

Lagrange polynomial ansatz functions (linear continuous Galerkin "$\mathrm{CG}_1$") are used for the linear and angular velocity and piecewise constant ansatz functions are used for the stress resultants (discontinuous Galerkin ``$\mathrm{DG}_0$")
$$
\mat{v}_h\in [\mathrm{CG}_1]^2, \; \omega_h \in \mathrm{CG}_1, \qquad \mat{n}_h \in [\mathrm{DG}_0]^2, \; m_h \in \mathrm{DG}_0.
$$
In the sense of a Bubnov-Galerkin method analogous ansatz functions for the related test functions are chosen
$$
\bm{\psi}_v\in [\mathrm{CG}_1]^2, \; \psi_\omega \in \mathrm{CG}_1, \qquad \bm{\psi}_n \in [\mathrm{DG}_0]^2, \; \psi_m \in \mathrm{DG}_0.
$$
Given $N_\mathrm{el}$ finite elements and mesh size $h=L/N_{\mathrm{el}}$, 
where $L$ is the beam length,
the finite element expansion is then written as 
\begin{equation}\label{eq:fe_expansion}
\begin{aligned}
    \mat{v}_h(s, t) &= \sum_{i=1}^{N_{\mathrm{el}}+1} \begin{bmatrix}
        \varphi_i(s) & 0 \\
        0 & \varphi_i(s)
    \end{bmatrix}
    \begin{pmatrix}
        \mathrm{v}_{x, i}(t) \\  
        \mathrm{v}_{y, i}(t) \\  
    \end{pmatrix}, \\ 
    \omega_h(s, t) &= \sum_{i=1}^{N_{\mathrm{el}}+1} \varphi_i(s) \mathrm{w}_{i}(t),
\end{aligned}
\qquad 
\begin{aligned}
    \mat{n}_h(s, t) &= \sum_{i=1}^{N_{\mathrm{el}}} \begin{bmatrix}
        \chi_i(s) & 0 \\
        0 & \chi_i(s)
    \end{bmatrix}
    \begin{pmatrix}
        \mathrm{n}_{x, i}(t) \\  
        \mathrm{n}_{y, i}(t) \\  
    \end{pmatrix}, \\ 
     m_h(s, t) &= \sum_{i=1}^{N_{\mathrm{el}}} \chi_i(s) \mathrm{m}_{i}(t),
\end{aligned}
\end{equation}
where $\varphi_i$ represent the $i-$th Lagrange function and $\chi_i(s) = \mathbbm{1}_{\{(i-1)h<s<ih\}}$ is the constant function over the element $i$. An analogously expansion is used for the test functions. Since the spaces satisfy $\partial_s \mathrm{CG}_1 \subset \mathrm{DG}_0$, cf. Fig.~\ref{fig:derivative_Lagrange},
this choice makes some relationships between the variables exact \cite{brugnoli2025nonlinear}. For instance one has the relationship 
$$C_r \partial_t m_h = \partial_s \omega_h$$
holds strongly. Since the test function $\bm{\psi}_n$ lives in a discontinuous space, the methods avoids transverse shear locking phenomena \cite{kinon2025energy}. More formally it holds
$$
\bm{C}_t \partial_t \mat{n}_h = \partial_s \mat{v}_h - \bm{b}_2 \Pi_{\mathrm{DG}_0} \omega_h,
$$
where $\Pi_{\mathrm{DG}_0}$ is the $L^2$ projection onto the $\mathrm{DG}_0$ space.

\begin{figure*}[ht]
\centering
\begin{tikzpicture}[scale=0.3]
\draw [->] (0,0) -- (0,5) ;

\draw [->] (0,0) -- (4,0) -- (8,0)  -- (12,0) -- (16,0)  -- (18,0) ;

\draw[<->] (0,-0.5) -- (4,-0.5) node[midway, below] {$h$};

\node[circle, fill=black, inner sep=2pt] at (0,0) {};
\node[circle, fill=black, inner sep=2pt] at (4,0) {};
\node[circle, fill=black, inner sep=2pt] at (8,0) {};
\node[circle, fill=black, inner sep=2pt] at (12,0) {};
\node[circle, fill=black, inner sep=2pt] at (16,0) {};

\draw[thick] (4,0) -- (8,4) node[above]{$\varphi_{3}$} ;
\draw[thick] (8,4) -- (12,0) ;

\draw (0,4) node[left]{$1$} [dotted] -- (16,4) ;

\draw [->] (0,-9) -- (0,-3) ;

\draw [->] (0,-6) -- (4,-6) -- (8,-6)  -- (12,-6) -- (16,-6) -- (18,-6) ;

\node[circle, fill=black, inner sep=2pt] at (0,-6) {};
\node[circle, fill=black, inner sep=2pt] at (4,-6) {};
\node[circle, fill=black, inner sep=2pt] at (8,-6) {};
\node[circle, fill=black, inner sep=2pt] at (12,-6) {};
\node[circle, fill=black, inner sep=2pt] at (16,-6) {};

\draw[fill=gray!20, dashed] (4,-6) -- (4,-4) -- (8,-4) -- (8,-6) -- cycle ;

\draw (0,-4) node[left]{$\frac{1}{h}$} [dotted] -- (16,-4) ;

\draw[fill=gray!20, dashed] (8,-6) -- (12,-6) -- (12,-8) -- (8,-8) -- cycle ;

\draw (0,-8) node[left]{$-\frac{1}{h}$} [dotted] -- (16,-8) ;

\draw[thick] (4,-4) -- (8,-4) node[above]{$\partial_x \varphi_3$} ;
\draw[thick] (8,-8) -- (12,-8) ;

\draw[->, bend right] (20, 2) to[out=90, in=90] (20,-6);

\node at (24,-2) {$\pdv{}{x}$};

\end{tikzpicture}
\caption{Derivative of a Lagrange space $\mathrm{CG}_1$, leading to a piecewise constant function.}
\label{fig:derivative_Lagrange}
\end{figure*}
As customary in port-Hamiltonian systems, the outputs are also included in the weak form. These coincide with nodal evaluations of the velocities at the extremities
$$
\begin{aligned}
    \bm{y}_v &= \mat{v}|_{\partial\Omega}, \\
    \bm{y}_\omega &= \omega|_{\partial\Omega}, \\
\end{aligned}
$$
Inserting the above finite element approximations into weak form \eqref{eq_weak_freefree}, and accounting for the arbitrariness of the test functions, the following set of ordinary differential equations (ODE) is obtained 
\begin{equation} \label{eq_free_free}
\begin{aligned}
    \mathrm{Diag}
\begin{bmatrix}
{\rho A}\mathbf{M}_{\otimes2} \\ 
\rho I\mathbf{M} \\
h \bm{C}_t \otimes \mathbf{I}_{N_e} \\
{C_r} h \mathbf{I}_{N_e}
\end{bmatrix}
    \odv{}{t}
    \begin{pmatrix}
        \mathbf{v} \\
        \mathbf{w} \\
        \mathbf{n} \\
        \mathbf{m}
    \end{pmatrix} &= 
    \begin{bmatrix}
        0 & 0 & -\mathbf{D}_{\otimes2}^\top & 0 \\
        0 & 0 & \mathbf{F}^\top & -\mathbf{D}^\top \\
        \mathbf{D}_{\otimes2} & -\mathbf{F} & 0 & 0 \\
        0 & \mathbf{D} & 0 & 0
    \end{bmatrix}
    \begin{pmatrix}
        \mathbf{v} \\
        \mathbf{w} \\
        \mathbf{n} \\
        \mathbf{m}
    \end{pmatrix} + \begin{bmatrix}
        \mathbf{T}_{\otimes2}^\top & 0 \\
        0 & \mathbf{T}^\top \\
        0 & 0 \\
        0 & 0 \\
    \end{bmatrix}\begin{pmatrix}
        \mathbf{u}_n \\
        \mathbf{u}_m
    \end{pmatrix}, \\
    \begin{pmatrix}
        \mathbf{y}_v \\
        \mathbf{y}_\omega
    \end{pmatrix} &= \begin{bmatrix}
        \mathbf{T}_{\otimes2} & 0 & 0 & 0 \\
        0 & \mathbf{T} & 0 & 0 \\
    \end{bmatrix} \begin{pmatrix}
        \mathbf{v} \\
        \mathbf{w} \\
        \mathbf{n} \\
        \mathbf{m}
    \end{pmatrix}.
\end{aligned}
\end{equation}
Vectors $\mathbf{v} = (\mathbf{v}_x,  \mathbf{v}_y) \in \mathbb{R}^{2N_{\mathrm{el}}+2}, \; \mathbf{w} \in \mathbb{R}^{N_{\mathrm{el}}+1}, \; \mathbf{n} = (\mathbf{n}_x, \mathbf{n}_y)\in \mathbb{R}^{2N_{\mathrm{el}}}$ and $\mathbf{m} \in \mathbb{R}^{N_{\mathrm{el}}}$ contain the finite element degrees of freedom. The notation $\mathbf{A}_{\otimes2}$ indicates that the matrix $\mathbf{A}$ is repeated $i-$times on the diagonal of the resulting matrix and can be written using the Kronecker product as $\mathbf{A}_{\otimes2} = \mathbf{I}_2 \otimes \mathbf{A}$. Furthermore, 
$\mathbf{M} \in \mathbb{R}^{(N_{\mathrm{el}}+1)\times(N_{\mathrm{el}}+1)}$ is the mass matrix obtained when using Lagrange basis and $\mathbf{D} \in \mathbb{R}^{N_{\mathrm{el}}\times(N_{\mathrm{el}}+1)}$ is an approximation matrix of the spatial derivative operator. The component-wise expression of the two is as follows
$$
[\mathbf{M}]_{ij} = \inpr[\Omega]{\varphi_i}{\varphi_j}, \qquad [\mathbf{D}]_{ij} = \inpr[\Omega]{\chi_i}{\partial_s \varphi_j}.
$$
Matrix $\mathbf{F}\in\mathbb{R}^{2N_{\mathrm{el}}\times (N_{\mathrm{el}}+1)}$ is given by
$$
\mathbf{F} = \begin{bmatrix}
   0  \\ \mathbf{\Pi}
\end{bmatrix}, \qquad [\mathbf{\Pi}]_{ij} = \inpr[\Omega]{\chi_i}{\varphi_j}.
$$
Finally $\mathbf{T}\in \mathbb{R}^{2\times N_{\mathrm{el}}+1}$ is a localization matrix that picks the degrees of freedom at the boundary
$$
\mathbf{T} = \begin{bmatrix}
    \mathbf{b}_1^\top \\
    \mathbf{b}_{N_{\mathrm{el}}+1}^\top
\end{bmatrix}, \qquad \mathbf{b}_1 = \begin{pmatrix}
    1 \\ 0 \\ 0 \\ \vdots \\ 0
\end{pmatrix}, \qquad \mathbf{b}_{N_{\mathrm{el}}+1} = \begin{pmatrix}
    0 \\ \vdots \\ 0 \\ 0 \\  1
\end{pmatrix}.
$$
The inputs of the system are the forces and bending moments at the extremities:
$$
\mathbf{u}_n = \begin{pmatrix}
    -\matcomp{n}_x(0, t) \\
    +\matcomp{n}_x(L, t) \\
    -\matcomp{n}_y(0, t) \\
    +\matcomp{n}_y(L, t) \\
\end{pmatrix}, \qquad \mathbf{u}_m = \begin{pmatrix}
    -{m}(0, t) \\
    +{m}(L, t)
\end{pmatrix}, 
$$
whereas the outputs are the linear and angular velocities
$$
\mathbf{y}_v = \begin{pmatrix}
    \matcomp{v}_x(0, t) \\
    \matcomp{v}_x(L, t) \\
    \matcomp{v}_y(0, t) \\
    \matcomp{v}_y(L, t) \\
\end{pmatrix}, \qquad \mathbf{y}_{\omega} = \begin{pmatrix}
    {\omega}(0, t) \\
    {\omega}(L, t)
\end{pmatrix}, 
$$
The system \eqref{eq_free_free} is written more compactly as 
$$
\begin{aligned}
\mathbf{M}_{F}^\mathbf{e}\dot{\mathbf{e}}_{F} &= \mathbf{J}_{F}^\mathbf{e}\mathbf{e}_{F} + \mathbf{B}_{F}^\mathbf{e}\mathbf{u}_{F}, \\
    \mathbf{y}_{F} &= (\mathbf{B}_{F}^\mathbf{e})^\top \mathbf{e}_{F},
\end{aligned}
$$
with positive definite $\mathbf{M}_{F} = \mathbf{M}_{F}^\top \succ 0$ and $\mathbf{J}_{F}^{\mathbf{e}} = - (\mathbf{J}_{F}^{\mathbf{e}})^\top$. 
The superscript of the structure matrix $\mathbf{J}_{F}^{\mathbf{e}}$ refers to the fact that the matrices are related to coenergy part $\mathbf{e}$. Later on, the kinematic relationships between the coordinates and the velocities are also included. Then the overall dynamics is still governed by an overall skew-symmetric matrix $\mathbf{J}$. The power conjugated outputs (the linear and angular velocities) have been added to interconnect systems with different boundary conditions. Note that the port matrix here and also in the following sections is constant. The resulting system can be represented via a block diagram as in Fig. \ref{fig:beam_free}.

\begin{figure}[htb]
    \centering
    \begin{tikzpicture}[scale=0.8]
    \draw[thick] (0,0) rectangle (3,2);
    
    \node at (1.5,1.4) {Free};
    
    \draw[thick, ->] (-1.5,1.5) -- (0,1.5);
    \node[left] at (-1.5,1.5) {$\mp\mat{n}|_{\partial \Omega}$};
    
    \draw[thick, ->] (-1.5,0.5) -- (0,0.5);
    \node[left] at (-1.5,0.5) {$\mp m|_{\partial \Omega}$};
    
    \draw[thick, ->] (3,1.5) -- (4.5,1.5);
    \node[right] at (4.5,1.5) {$\mat{v}|_{\partial \Omega}$};
    
    \draw[thick, ->] (3,0.5) -- (4.5,0.5);
    \node[right] at (4.5,0.5) {$\omega|_{\partial \Omega}$};

    \draw[very thick] (0.5,0.8) -- (2.5,0.8);
    
    \draw[thick] (0.4,0.7) -- (0.4,0.9);
    \draw[thick] (0.3,0.7) -- (0.3,0.9);
    \draw[thick] (2.6,0.7) -- (2.6,0.9);
    \draw[thick] (2.7,0.7) -- (2.7,0.9);

\end{tikzpicture}
    \caption{Block diagram for the free-free case.}
    \label{fig:beam_free}
\end{figure}

Let us now proceed analogously with a different set of boundary conditions: The case where both extremities of the beam are clamped.

\subsection{Clamped-Clamped boundary conditions} \label{sec:spatial_discretization_clampedclamped}

If the last two lines of \eqref{eq_weak_form} are integrated by parts then one obtains
\begin{equation}
\begin{aligned}
   \inpr[\Omega]{\bm{\psi}_v}{\rho A \partial_t \mat{v}} &= +\inpr[\Omega]{\bm{\psi}_v}{\partial_s \mat{n}}, \\
    \inpr[\Omega]{{\psi}_{\omega}}{\rho I \partial_t \omega} &= +\inpr[\Omega]{{\psi}_{\omega}}{\bm{b}_2^\top \mat{n}} + \inpr[\Omega]{{\psi}_{\omega}}{\partial_s {m}}, \\
    \inpr[\Omega]{\bm{\psi}_n}{\bm{C}_t \partial_t \mat{n}} &= -\inpr[\Omega]{\bm{\psi}_n}{\bm{b}_2 {\omega}} - \inpr[\Omega]{\partial_s \bm{\psi}_n}{\mat{v}} + \dualpr[\partial \Omega]{\bm{\psi}_n}{\mat{v}}, \\
    \inpr[\Omega]{{\psi}_m}{{C}_r \partial_t m} &= - \inpr[\Omega]{\partial_s{\psi}_m}{\omega} + \dualpr[\partial \Omega]{{\psi}_m}{\omega}. \\    
\end{aligned}
\end{equation}
\begin{figure}[htb]
    \centering
    \begin{tikzpicture}
    \draw[thick] (0,0) rectangle (3,2);
    
    \node at (1.5,1.4) {Clamped};
    
    \draw[thick, ->] (-1.5,1.5) -- (0,1.5);
    \node[left] at (-1.5,1.5) {$\mat{v}|_{\partial \Omega}$};
    
    \draw[thick, ->] (-1.5,0.5) -- (0,0.5);
    \node[left] at (-1.5,0.5) {${\omega}|_{\partial \Omega}$};
    
    \draw[thick, ->] (3,1.5) -- (4.5,1.5);
    \node[right] at (4.5,1.5) {$\mp\mat{n}|_{\partial \Omega}$};
    
    \draw[thick, ->] (3,0.5) -- (4.5,0.5);
    \node[right] at (4.5,0.5) {$\mp{m}|_{\partial \Omega}$};

    \draw[very thick] (0.5,0.6) -- (2.5,0.6);
    
    \draw[very thick] (0.5,0.8) -- (0.5,0.4);
    \fill[pattern=north east lines] (0.3,0.8) rectangle (0.5,0.4);
    \draw[thick] (0.3,0.8) -- (0.3,0.4);
    
    \draw[very thick] (2.5,0.8) -- (2.5,0.4);
    \fill[pattern=north east lines] (2.5,0.8) rectangle (2.7,0.4);
    \draw[thick] (2.7,0.8) -- (2.7,0.4);

\end{tikzpicture}
    \caption{Block diagram for the clamped-clamped case.}
    \label{fig:beam_clamped}
\end{figure}
For this formulation continuous Lagrange ansatz functions of order one are used for the stress resultants and piecewise constant ansatz functions are used for the linear and angular velocity, i.e,
$$
\mat{n}_h \in [\mathrm{CG}_1]^2, \; {m}_h \in \mathrm{CG}_1, \qquad \mat{v}_h \in [\mathrm{DG}_0]^2, \; {\omega}_h \in \mathrm{DG}_0.
$$
Introducing the finite element approximations to the weak form, the following ODE is obtained, cf. Fig. \ref{fig:beam_clamped}
\begin{equation}\label{eq:discrete_clamped}
\begin{aligned}
\mathrm{Diag}
\begin{bmatrix}
{\rho A} h\mathbf{I}_{2N_e} \\ 
\rho I h\mathbf{I}_{N_e} \\
\bm{C}_t \otimes \mathbf{M} \\
{C_r} \mathbf{M}
\end{bmatrix}
    \odv{}{t}
    \begin{pmatrix}
        \mathbf{v} \\
        \mathbf{w} \\
        \mathbf{n} \\
        \mathbf{m}
    \end{pmatrix} &= 
    \begin{bmatrix}
        0 & 0 & \mathbf{D}_{\otimes2} & 0 \\
        0 & 0 & \mathbf{F}^\top & \mathbf{D} \\
        -\mathbf{D}_{\otimes2}^\top & -\mathbf{F} & 0 & 0 \\
        0 & -\mathbf{D}^\top & 0 & 0
    \end{bmatrix}
    \begin{pmatrix}
        \mathbf{v} \\
        \mathbf{w} \\
        \mathbf{n} \\
        \mathbf{m}
    \end{pmatrix} + \begin{bmatrix}
        0 & 0 \\
        0 & 0 \\
         \mathbf{T}_{\nu, \otimes2}^\top & 0 \\
        0 & \mathbf{T}_{\nu}^\top \\
    \end{bmatrix}\begin{pmatrix}
        \mathbf{u}_v \\
        \mathbf{u}_{\omega}
    \end{pmatrix}, \\
\begin{pmatrix}
        \mathbf{y}_n \\
        \mathbf{y}_{m}
    \end{pmatrix} &=
    \begin{bmatrix}
        0 & 0 & \mathbf{T}_{\nu, \otimes2} & 0 \\
        0 & 0 & 0 & \mathbf{T}_{\nu} \\
    \end{bmatrix} \begin{pmatrix}
        \mathbf{v} \\
        \mathbf{w} \\
        \mathbf{n} \\
        \mathbf{m}
    \end{pmatrix},
\end{aligned}
\end{equation}
where $\mathbf{T}_{\nu} \in \mathbb{R}^{2\times (N_{\mathrm{el}}+1)}$ is the normal trace matrix, taking values $-1$ or $1$ for the left and right extremity degree of freedom
$$
\mathbf{T}_{\nu} = \begin{bmatrix}
    -\mathbf{b}_1^\top \\
    \mathbf{b}_{N_{\mathrm{el}}+1}^\top
\end{bmatrix}.
$$
Matrix $\mathbf{F}\in\mathbb{R}^{2(N_{\mathrm{el}}+1) \times N_{\mathrm{el}}}$ is given by
$$
\mathbf{F} = \begin{bmatrix}
    0  \\ \mathbf{\Pi}^\top
\end{bmatrix}.
$$
Matrix $\mathbf{F}$ takes a different expression with respect to the previous example as the finite element bases have changed. 
Contrarily to the previous section, here,
the input to the systems are the linear and angular velocities at the extremities,
$$
\mathbf{u}_v = \begin{pmatrix}
    \matcomp{v}_x(0, t) \\
    \matcomp{v}_x(L, t) \\
    \matcomp{v}_y(0, t) \\
    \matcomp{v}_y(L, t) \\
\end{pmatrix}, \qquad 
\mathbf{u}_\omega = \begin{pmatrix}
    \omega(0, t) \\
    \omega(L, t)
\end{pmatrix}.
$$ 
whereas the outputs represent the forces and momenta
$$
\mathbf{y}_n = \begin{pmatrix}
    -\matcomp{n}_x(0, t) \\
    +\matcomp{n}_x(L, t) \\
    -\matcomp{n}_y(0, t) \\
    +\matcomp{n}_y(L, t) \\
\end{pmatrix}, \qquad 
\mathbf{y}_m = \begin{pmatrix}
    m(0, t) \\
    m(L, t)
\end{pmatrix}.
$$ 
As before,
the system \eqref{eq:discrete_clamped} can be written in a more compact form given by
$$
\begin{aligned}
\mathbf{M}_{C}^\mathbf{e}\dot{\mathbf{e}}_{C} &= \mathbf{J}_{C}^\mathbf{e}\mathbf{e}_{C} + \mathbf{B}_{C}^\mathbf{e}\mathbf{u}_{C}, \\
    \mathbf{y}_{C} &= (\mathbf{B}_{C}^\mathbf{e})^\top \mathbf{e}_{C}.
\end{aligned}
$$
In the following, we treat forces differently than moments and linear velocities differently than angular velocities. This leads to two further causality cases.

\subsection{Pinned case} \label{sec:spatial_discretization_pinnedpinned}

If the second and third line of \eqref{eq_weak_form} are integrated by parts then one obtains
\begin{equation}
\begin{aligned}
    \inpr[\Omega]{\bm{\psi}_v}{\rho A \partial_t \mat{v}} &= +\inpr[\Omega]{\bm{\psi}_v}{\partial_s \mat{n}}, \\
    \inpr[\Omega]{{\psi}_{\omega}}{\rho I \partial_t {\omega}} &= +\inpr[\Omega]{{\psi}_{\omega}}{\bm{b}_2^\top\mat{n}} -\inpr[\Omega]{\partial_s {\psi}_{\omega}}{m} + \dualpr[\partial\Omega]{{\psi}_{\omega}}{m}, \\
    \inpr[\Omega]{\bm{\psi}_n}{\bm{C}_t \partial_t \mat{n}} &= -\inpr[\Omega]{\bm{\psi}_n}{\bm{b}_2\omega} - \inpr[\Omega]{\partial_s \bm{\psi}_n}{\mat{v}} + \dualpr[\partial \Omega]{\bm{\psi}_n }{\mat{v}}, \\
    \inpr[\Omega]{{\psi}_m}{\bm{C}_r \partial_t m} &= +\inpr[\Omega]{{\psi}_m}{\partial_s \omega}. \\    
\end{aligned}
\end{equation}
For this formulation we consider
$$
{\omega} \in \mathrm{CG}_1,\; \mat{n} \in [\mathrm{CG}_1]^2, \qquad \mat{v}\in [\mathrm{DG}_0]^2, \; {m} \in \mathrm{DG}_0.
$$
and the following ODE is obtained, cf. Fig. \ref{fig:beam_pinned},
\begin{equation}\label{eq:discrete_pinned}
\begin{aligned}
    \mathrm{Diag}
\begin{bmatrix}
{\rho A} h\mathbf{I}_{2N_e} \\ 
\rho I \mathbf{M} \\
\bm{C}_t \otimes \mathbf{M} \\
{C_r} h \mathbf{I}_{N_e}
\end{bmatrix}
    \odv{}{t}
    \begin{pmatrix}
        \mathbf{v} \\
        \mathbf{w} \\
        \mathbf{n} \\
        \mathbf{m}
    \end{pmatrix} &= 
    \begin{bmatrix}
        0 & 0 & \mathbf{D}_{\otimes2} & 0 \\
        0 & 0 & \mathbf{F}^\top & -\mathbf{D}^\top \\
        -\mathbf{D}_{\otimes2}^\top & -\mathbf{F} & 0 & 0 \\
        0 & \mathbf{D} & 0 & 0
    \end{bmatrix}
    \begin{pmatrix}
        \mathbf{v} \\
        \mathbf{w} \\
        \mathbf{n} \\
        \mathbf{m}
    \end{pmatrix} + \begin{bmatrix}
        0 & 0 \\
         \mathbf{T}^\top & 0 \\
        0 & \mathbf{T}_{\nu, \otimes2}^\top \\
        0 & 0 \\
    \end{bmatrix}\begin{pmatrix}
        \mathbf{u}_{m} \\
        \mathbf{u}_v \\
    \end{pmatrix}, \\
    \begin{pmatrix}
        \mathbf{y}_{\omega} \\
        \mathbf{y}_n \\
    \end{pmatrix} &= \begin{bmatrix}
        0 & \mathbf{T} & 0 & 0 \\
        0 & 0 & \mathbf{T}_{\nu, \otimes2} & 0 \\
    \end{bmatrix}\begin{pmatrix}
        \mathbf{v} \\
        \mathbf{w} \\
        \mathbf{n} \\
        \mathbf{m}
    \end{pmatrix}.
\end{aligned}
\end{equation}
Matrix $\mathbf{F}\in\mathbb{R}^{2(N_{\mathrm{el}}+1) \times (N_{\mathrm{el}}+1)}$ takes a different expression with respect to the previous examples as the finite element bases have changed
$$
\mathbf{F} = \begin{bmatrix}
    0  \\ \mathbf{M}
\end{bmatrix}.
$$
The system can be written in a more compact form 
$$
\begin{aligned}
\mathbf{M}_{P}^\mathbf{e}\dot{\mathbf{e}}_{P} &= \mathbf{J}_{P}^\mathbf{e}\mathbf{e}_{P} + \mathbf{B}_{P}^\mathbf{e}\mathbf{u}_{P}, \\
    \mathbf{y}_{P} &= (\mathbf{B}_{P}^\mathbf{e})^\top \mathbf{e}_{P}.
\end{aligned}
$$

\begin{figure}[htb]
    \centering
    \begin{tikzpicture}
    \draw[thick] (0,0) rectangle (3,2);
    
    \node at (1.5,1.4) {Pinned};
    
    \draw[thick, ->] (-1.5,1.5) -- (0,1.5);
    \node[left] at (-1.5,1.5) {$\pm{m}|_{\partial \Omega}$};
    
    \draw[thick, ->] (-1.5,0.5) -- (0,0.5);
    \node[left] at (-1.5,0.5) {$\mat{v}|_{\partial \Omega}$};
    
    \draw[thick, ->] (3,1.5) -- (4.5,1.5);
    \node[right] at (4.5,1.5) {${\omega}|_{\partial \Omega}$};
    
    \draw[thick, ->] (3,0.5) -- (4.5,0.5);
    \node[right] at (4.5,0.5) {$\pm\mat{n}|_{\partial \Omega}$};

    \draw[very thick] (0.5,0.7) -- (2.5,0.7);
    
    \draw[very thick] (0.5,0.7) -- (0.3,0.4) -- (0.7,0.4) -- cycle;
    \fill[pattern=north east lines] (0.3,0.4) rectangle (0.7,0.3);
    \draw[thick] (0.5,0.7) circle (0.07);
    
    \draw[very thick] (2.5,0.7) -- (2.3,0.4) -- (2.7,0.4) -- cycle;
    \fill[pattern=north east lines] (2.3,0.4) rectangle (2.7,0.3);
    \draw[thick] (2.5,0.7) circle (0.07);

\end{tikzpicture}
    \caption{Block diagram for the pinned case.}
    \label{fig:beam_pinned}
\end{figure}

\subsection{Guided} \label{sec:spatial_discretization_guided}

If the first and fourth line of \eqref{eq_weak_form} are integrated by parts then one obtains
\begin{equation}
\begin{aligned}
    \inpr[\Omega]{\bm{\psi}_v}{\rho A \partial_t \mat{v}} &= - \inpr[\Omega]{\partial_s \bm{\psi}_v}{\mat{n}} + \dualpr[\partial\Omega]{\bm{\psi}_v}{\mat{n}}, \\
    \inpr[\Omega]{{\psi}_{\omega}}{\rho I \partial_t \omega} &= +\inpr[\Omega]{{\psi}_{\omega}}{\bm{b}_2^\top \mat{n}} +\inpr[\Omega]{\partial_s {\psi}_{\omega}}{m}, \\
    \inpr[\Omega]{\bm{\psi}_n}{\bm{C}_t \partial_t \mat{n}} &= -\inpr[\Omega]{\bm{\psi}_n}{\bm{b}_2 \omega} + \inpr[\Omega]{\partial_s \bm{\psi}_n}{\mat{v}}, \\
    \inpr[\Omega]{{\psi}_m}{\bm{C}_r \partial_t {m}} &= -\inpr[\Omega]{\partial_s {\psi}_m}{ \omega} + \dualpr[\partial\Omega]{{\psi}_m  }{ \omega}. \\    
\end{aligned}
\end{equation}
For this formulation we consider
$$
\mat{v}\in [\mathrm{CG}_1]^2, \; {m} \in \mathrm{CG}_1, \qquad {\omega} \in \mathrm{DG}_0,\; \mat{n} \in [\mathrm{DG}_0]^2.
$$
such that the following ODE is obtained 
\begin{equation}\label{eq:discrete_guided}
\begin{aligned}
    \mathrm{Diag}
\begin{bmatrix}
{\rho A} h\mathbf{I}_{2N_e} \\ 
\rho I \mathbf{M} \\
\bm{C}_t \otimes \mathbf{M} \\
{C_r} h \mathbf{I}_{N_e}
\end{bmatrix}
    \odv{}{t}
    \begin{pmatrix}
        \mathbf{v} \\
        \mathbf{w} \\
        \mathbf{n} \\
        \mathbf{m}
    \end{pmatrix} &= 
    \begin{bmatrix}
        0 & 0 & -\mathbf{D}_{\otimes2}^\top & 0 \\
        0 & 0 & \mathbf{F}^\top & \mathbf{D} \\
        \mathbf{D}_{\otimes2} & -\mathbf{F} & 0 & 0 \\
        0 & -\mathbf{D}^\top & 0 & 0
    \end{bmatrix}
    \begin{pmatrix}
        \mathbf{v} \\
        \mathbf{w} \\
        \mathbf{n} \\
        \mathbf{m}
    \end{pmatrix} + \begin{bmatrix}
        \mathbf{T}^\top & 0 \\
        0 & 0 \\
        0 & 0 \\
        0 & \mathbf{T}_{\nu, \otimes2}^\top \\
    \end{bmatrix}\begin{pmatrix}
        \mathbf{u}_{n} \\
        \mathbf{u}_\omega \\
    \end{pmatrix}, \\
    \begin{pmatrix}
        \mathbf{y}_v \\
        \mathbf{y}_m \\
    \end{pmatrix} &=
    \begin{bmatrix}
        \mathbf{T} & 0  & 0 & 0\\
        0 & 0 & 0 & \mathbf{T}_{\nu, \otimes2} \\
    \end{bmatrix}
    \begin{pmatrix}
        \mathbf{v} \\
        \mathbf{w} \\
        \mathbf{n} \\
        \mathbf{m}
    \end{pmatrix}.
\end{aligned}
\end{equation}
Matrix $\mathbf{F}\in\mathbb{R}^{2N_{\mathrm{el}} \times N_{\mathrm{el}}}$ is given by
$$
\mathbf{F} = \begin{bmatrix}
    0  \\ \mathbf{G}
\end{bmatrix}.
$$
Matrix $\mathbf{G}$ takes a different expression with respect to the previous example as the finite element bases
have changed $[\mathbf{G}]_{ij} = \inpr[\Omega]{\chi_i}{\chi_j}.$ The system can be written compactly as
$$
\begin{aligned}
\mathbf{M}_{G}^\mathbf{e}\dot{\mathbf{e}}_{G} &= \mathbf{J}_{G}^\mathbf{e}\mathbf{e}_{G} + \mathbf{B}_{G}^\mathbf{e}\mathbf{u}_{G}, \\
    \mathbf{y}_{G} &= (\mathbf{B}_{G}^\mathbf{e})^\top \mathbf{e}_{G}.
\end{aligned}
$$

\begin{figure}[htb]
    \centering
    \begin{tikzpicture}
    \draw[thick] (0,0) rectangle (3,2);
    
    \node at (1.5,1.4) {Guided};
    
    \draw[thick, ->] (-1.5,1.5) -- (0,1.5);
    \node[left] at (-1.5,1.5) {$\mat{n}|_{\partial \Omega}$};
    
    \draw[thick, ->] (-1.5,0.5) -- (0,0.5);
    \node[left] at (-1.5,0.5) {${\omega}|_{\partial \Omega}$};
    
    \draw[thick, ->] (3,1.5) -- (4.5,1.5);
    \node[right] at (4.5,1.5) {$\mat{v}|_{\partial \Omega}$};
    
    \draw[thick, ->] (3,0.5) -- (4.5,0.5);
    \node[right] at (4.5,0.5) {$m|_{\partial \Omega}$};

    \draw[very thick] (0.5,0.7) -- (2.5,0.7);
    
    \draw[very thick] (0.3,0.6) -- (0.7,0.6);
    \fill[pattern=north east lines] (0.3,0.6) rectangle (0.7,0.5);
    \draw[very thick] (0.3,0.8) -- (0.7,0.8);
    \fill[pattern=north east lines] (0.3,0.8) rectangle (0.7,0.9);

    \draw[very thick] (2.3,0.6) -- (2.7,0.6);
    \fill[pattern=north east lines] (2.3,0.6) rectangle (2.7,0.5);
    \draw[very thick] (2.3,0.8) -- (2.7,0.8);
    \fill[pattern=north east lines] (2.3,0.8) rectangle (2.7,0.9);

\end{tikzpicture}
    \caption{Block diagram for the guided case.}
    \label{fig:beam_guided}
\end{figure}

\subsection{Extension to the nonlinear case and to gravity effects}\label{sec:nonlinear_gravity}

As we have seen in the previous sections,
the discretization of the linear part leads to a system of the form 
\begin{equation}
    \begin{aligned}
        \mathbf{M}^\mathbf{e}_{\square}\dot{\mathbf{e}} &= \mathbf{J}^{\mathbf{e}}_{\square}\mathbf{e}_\square + \mathbf{B}^\mathbf{e}_\square\mathbf{u}_\square, \\
        \mathbf{y} &= (\mathbf{B}_\square^\mathbf{e})^\top \mathbf{e}_\square,
    \end{aligned}
\end{equation}
where $\square$ indicates the boundary conditions for the model, i.e. $\square = \{F,\; C,\; P,\; G\}$.
In the following, due to the structure of the material beam formulation \eqref{eq_ph_inter_m}, nonlinear terms are simply added to the weak formulation. For instance, if one considers a free-free boundary causality \eqref{eq_weak_freefree}, the linear part of the variational formulation reads
\begin{equation*}
\begin{aligned}
    \inpr[\Omega]{\bm{\psi}_v}{\rho A \partial_t \mat{v}} =& - \inpr[\Omega]{\partial_s  \bm{\psi}_v}{\mat{n}}  + \dualpr[\partial \Omega]{\bm{\psi}_v}{\mat{n}}, \\
    \inpr[\Omega]{{\psi}_{\omega}}{\rho I \partial_t {\omega}} =& - \inpr[\Omega]{\partial_s {\psi}_{\omega}}{m} + \inpr[\Omega]{{\psi}_{\omega}}{\bm{b}_1^\top \bm{S}\mat{n}}+\dualpr[\partial \Omega]{{\psi}_{\omega}}{m}, \\
    \inpr[\Omega]{\bm{\psi}_n}{\bm{C}_t \partial_t \mat{n}} =& + \inpr[\Omega]{\bm{\psi}_n}{\partial_s \mat{v}} + \inpr[\Omega]{\bm{\psi}_n}{\bm{S}\bm{b}_1 \omega}, \\
    \inpr[\Omega]{{\psi}_m}{C_r \partial_t m} =& +\inpr[\Omega]{{\psi}_m}{\partial_s \omega}.
\end{aligned}
\end{equation*}
For the nonlinear weak formulation, since the nonlinearity is just algebraic it does not require a specified treatment and it is simply projected onto the appropriate finite element space. The thus completed weak form is given by
\begin{equation}
\begin{aligned}
    \inpr[\Omega]{\bm{\psi}_v}{\rho A \partial_t \mat{v}} =& - \inpr[\Omega]{\partial_s  \bm{\psi}_v}{\mat{n}} + \inpr[\Omega]{\bm{\psi}_v}{\bm{S}\mat{\pi}_v\bm{\omega}}  +  \inpr[\Omega]{\bm{\psi}_v}{\bm{S}^\top \kappa \mat{n}} + \dualpr[\partial \Omega]{\bm{\psi}_v}{\mat{n}}, \\
    \inpr[\Omega]{{\psi}_{\omega}}{\rho I \partial_t {\omega}} =& - \inpr[\Omega]{\partial_s {\psi}_{\omega}}{m} + \inpr[\Omega]{{\psi}_{\omega}}{\mat{\pi}_v^\top \bm{S}\mat{v}} + \inpr[\Omega]{{\psi}_{\omega}}{(\mat{\gamma} + \bm{b}_1)^\top \bm{S}\mat{n}}+\dualpr[\partial \Omega]{{\psi}_{\omega}}{m}, \\
    \inpr[\Omega]{\bm{\psi}_n}{\bm{C}_t \partial_t \mat{n}} =& + \inpr[\Omega]{\bm{\psi}_n}{\partial_s \mat{v}} + \inpr[\Omega]{\bm{\psi}_n}{\bm{S}^\top \kappa \mat{v}} + \inpr[\Omega]{\bm{\psi}_n}{\bm{S}(\mat{\gamma}+\bm{b}_1 )\omega}, \\
    \inpr[\Omega]{{\psi}_m}{C_r \partial_t m} =& +\inpr[\Omega]{{\psi}_m}{\partial_s \omega}.
\end{aligned}
\end{equation}
Once a Galerkin finite element formulation is applied, a final system of the following form is obtained
\begin{equation}
    \begin{aligned}
        \mathbf{M}^\mathbf{e}_{\square}\dot{\mathbf{e}} &= \mathbf{J}^{\mathbf{e}}_{\square}(\mathbf{e}_{\square})\mathbf{e}_\square + \mathbf{B}^\mathbf{e}_\square\mathbf{u}_\square, \\
        \mathbf{y} &= (\mathbf{B}_\square^\mathbf{e})^\top \mathbf{e}_\square.
    \end{aligned}
\end{equation}
Therein, the matrix $\mathbf{J}^{\mathbf{e}}_{\square}(\mathbf{e}_{\square}) = \mathbf{J}^{\mathbf{e}}_{\square} + \mathbf{J}^{\mathbf{e}}_{\square, 1}(\mathbf{e}_{\square})$ 
is decomposed into a constant part $\mathbf{J}^{\mathbf{e}}_{\square}$ and a part $\mathbf{J}^{\mathbf{e}}_{\square, 1}$ that depends linearly on the state. 
The constant part is obtained according to the finite element discretization explained in Secs.~\ref{sec:spatial_discretization_freefree}-\ref{sec:spatial_discretization_guided}. The part that depends linearly on the state is due to the geometric nonlinearity. After Galerkin projection the nonlinear part depends on the considered formulation as different finite element spaces are used in each.

The incorporation of gravity is achieved by considering the projection of the associated terms on the finite element basis. An important point is the preservation of the skew-symmetry of the interconnection operator. To achieve so, the centerline position and the cross-section angle are discretized using the same finite elements as the material velocity and angular velocity, respectively. Furthermore, the gravity acceleration is also discretized using the same finite element basis as the centerline position. 
Omitting the symbol $\square$, the dynamics incorporating the gravity effect is given by
\begin{equation}\label{eq:discrete_nonlinear}
\begin{aligned}
    \begin{bmatrix}
    \mathbf{M}^{\mathbf{q}} & 0 \\
    0 & \mathbf{M}^{\mathbf{e}}
\end{bmatrix} \odv{}{t}  
\begin{pmatrix}
    \mathbf{q} \\
    \mathbf{e}
\end{pmatrix} &= 
\begin{bmatrix}
    0 & \mathbf{P}(\bm{\theta}) \\
    -\mathbf{P}^\top(\bm{\theta}) & \mathbf{J}^{\mathbf{e}}(\mathbf{e})
\end{bmatrix}
\begin{pmatrix}
    \mathbf{z}_{\mathbf{q}} \\
    \mathbf{e}
\end{pmatrix} + 
\begin{bmatrix}
    0 \\
    \mathbf{B}^\mathbf{e}
\end{bmatrix}\mathbf{u}, \\
\mathbf{y} &= \begin{bmatrix}
    0 & (\mathbf{B}^\mathbf{e})^\top
\end{bmatrix}
\begin{pmatrix}
    \mathbf{z}_{\mathbf{q}} \\
    \mathbf{e}
\end{pmatrix},
\end{aligned}
\end{equation}
where $\mathbf{q}=(\mathbf{r}_x, \mathbf{r}_y, \bm{\theta})$ and the gravity force is given by the vector
\begin{equation}\label{eq:z_vector}
\mathbf{z}_{\mathbf{q}} := \rho A g\begin{pmatrix}
    0 \\ 
    \mathbf{1} \\
    0
\end{pmatrix}, \qquad \text{where} \quad \mathbf{1} := \begin{pmatrix}
    1 \\ \vdots \\ 1
\end{pmatrix}.    
\end{equation}

For details on the construction of matrix $\mathbf{P}(\theta)$ the reader can consult Appendix~\ref{sec:p_matrix}. The system \eqref{eq:discrete_nonlinear} can then be written as a port-Hamiltonian descriptor system \cite{morandin2019descriptor}
$$
\begin{aligned}
\mathbf{E}\dot{\mathbf{x}} &= \mathbf{J}(\mathbf{x}) \mathbf{z} + \mathbf{B} \mathbf{u}, \\
\mathbf{y} &= \mathbf{B}^\top \mathbf{z},
\end{aligned}
$$
with $\mathbf{E}=\mathbf{E}^\top \succ 0, \: \mathbf{J}(\mathbf{x}) = -\mathbf{J}^\top$. All matrices and vectors are defined by comparison with \eqref{eq:discrete_nonlinear}. The gradient of the Hamiltonian $H(\mathbf{x}) = \frac{1}{2}\mathbf{e}^\top \mathbf{M}^{\mathbf{e}}\mathbf{e} + U(\mathbf{q})$ satisfies
\begin{equation}\label{eq:ham_approximation}
\mathbf{E}^\top \mathbf{z} = \nabla_{\mathbf{x}} H.
\end{equation}
Therein, the gravity potential introduced in \eqref{eq:gravity_potential} is given by $U(\mathbf{q}) = \int_{\Omega} \rho A g {r}_y \d{s}$, where $g$ is the gravity acceleration and $r_y$ is the vertical position of the centerline. Consider the finite element expansion of the centerline position 
$$\bm{r}_h(s,t) = \begin{bmatrix}
\bm{\xi}_r(s) & 0 \\
0 &  \bm{\xi}_r(s)
\end{bmatrix}
\begin{pmatrix}
    \mathbf{r}_x(t) \\
    \mathbf{r}_y(t)
\end{pmatrix},
$$
where $\bm{\xi}_r$ is a row vector collecting the finite element basis for the components of $\bm{r}_h$. Depending on the formulation, it is  given by $\bm{\xi}_r = [\varphi_1 \dots \varphi_{N_e+1}] \in \mathbb{R}^{1\times N_e+1}$ or $\bm{\xi}_r = [\chi_1 \dots \chi_{N_e}]\in \mathbb{R}^{1\times N_e}$, where $\phi_i$ and $\chi_i$ have been introduced in Eq. \eqref{eq:fe_expansion}. Plugging this expansion in the gravity potential gives $U(\mathbf{r}_y) = \rho A g \mathbf{a}^\top \mathbf{r}_y$, where $\mathbf{a}=\int_{\Omega} \bm{\xi}_r^\top \d{s}$. In particular the rows related to $\mathbf{r}_y$ in Eq. \eqref{eq:ham_approximation} give the identity
\begin{equation}\label{eq:gradient_potential}
    \mathbf{M}^{\mathbf{r}_y} \mathbf{1} = \mathbf{a}, \qquad \text{where} \quad \mathbf{M}^{\mathbf{r}_y}=\int_{\Omega} \bm{\xi}_r^\top \bm{\xi}_r \d{s}.
\end{equation}
Note that we have taken into account the finite element basis in the latter expression. Although not recommended from a numerical standpoint, the system can in principle be written in canonical port-Hamiltonian form \cite{vanderschaft2014overview} as
$$
\begin{aligned}
    \dot{\mathbf{x}} &= \tilde{\mathbf{J}}(\mathbf{x}) \nabla_{\mathbf{x}} H + \tilde{\mathbf{B}} \mathbf{u}, \\
\mathbf{y} &= \tilde{\mathbf{B}}^\top \nabla_{\mathbf{x}} H.
\end{aligned}
$$
in which $\tilde{\mathbf{J}}(\mathbf{x}) := \mathbf{E}^{-1}\mathbf{J}(\mathbf{x}) \mathbf{E}^{-\top}$ and $\tilde{\mathbf{B}} := \mathbf{E}^{-1}\mathbf{B}$. This is due to the fact that $\mathbf{E} \succ 0$ can be inverted. 

\section{Interconnection of subsystems}\label{sec:interconnection}
In this section, the different discrete formulations (free, clamped, pinned) are interconnected to assemble systems without the need of Lagrange multipliers. For the considered examples the guided model is not employed. This can be used for two different purposes:
\begin{itemize}
    \item We obtain mixed boundary conditions. This point is explained in a more general and abstract framework in \cite{dejong2026decomposition} but here it is made concrete by the model choice;
    \item We construct multibody systems with less or in some cases even no algebraic constraints;
\end{itemize}

Therefore, the methodology shares similarities with the two-input two-output approach \cite{finozzi2022parametric,sanfedino2022advances}. However, the presented approach guarantees the preservation of the overall Hamiltonian structure without the need of inverting a dynamical system. The interconnection of systems embodies the power preservation across interconnection points and is an analogous procedure to the choice of flux terms in discontinuous Galerkin methods \cite{hesthaven2008dg}. These two aspects are illustrated by two examples respectively: a cantilever beam and a four bar mechanism. \\

\begin{remark}
    This way of interconnecting systems corresponds to using a gyrator or a feedback interconnection \cite{vanderschaft2014overview}. Another way, that is analogous to finite element assembly, is to interconnect system via a transformer interconnection. This transformer interconnection is used when systems arising from the same formulation are used together. The interested reader can consult \cite{brugnoli2021multibody} for an application of the latter approach to the floating frame of reference formulations of beams.
\end{remark}

\subsection{Mixed boundary conditions: a cantilever beam}
To obtain a dynamical system representing a cantilever beam, cf Fig. \ref{fig:cantilever_beam}, the clamped-clamped representation presented in Sec. \ref{sec:spatial_discretization_clampedclamped} is interconnected with the free-free one of Sec. \ref{sec:spatial_discretization_freefree} by means of a feedback interconnection.

\begin{figure}[htb]
    \centering
    \begin{minipage}{0.38\textwidth}
        \centering
        \includegraphics[width=\linewidth]{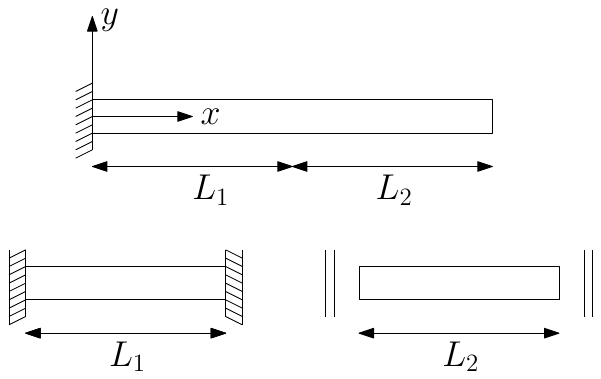}
        \caption{Cantilever beam split into a clamped and a free beam}
        \label{fig:cantilever_beam}
    \end{minipage}\hfill
    \begin{minipage}{0.58\textwidth}
        \centering
        \includegraphics[width=\linewidth]{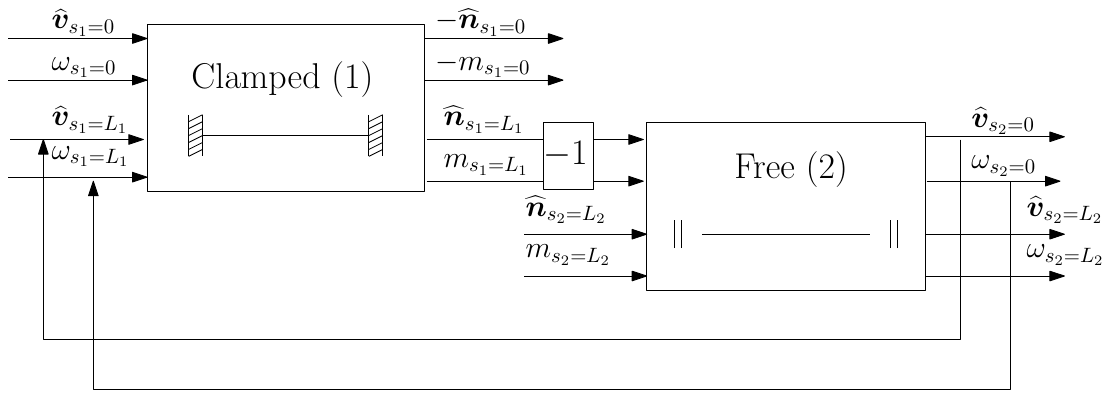}
        \caption{A cantilever beam as interconnected system}
        \label{fig:cantilever_blockdiagram}
    \end{minipage}
\end{figure}

The overall cantilever beam can be split at an arbitrary point with coordinate $x=L_1$. Here the midpoint is chosen for simplicity. To express the interconnection, the left (l) and right (r) inputs and outputs are explicitly shown. The left half is described by the clamped-clamped formulation.

\begin{equation*}
\text{Beam 1} \quad
\begin{cases}
\mathbf{E}_{C} \dot{\mathbf{x}}_{C} = \mathbf{J}_{C}(\mathbf{x}_{C}) \mathbf{z}_{C} + \mathbf{B}_{C,l}\mathbf{u}_{C, l} + \mathbf{B}_{C, r}{\mathbf{u}_{C, r}}, \\
\mathbf{y}_{C, l} = \mathbf{B}_{C, l}^\top {\mathbf{z}}_{C}, \\
{\mathbf{y}_{C, r}} = \mathbf{B}_{C, r}^\top {\mathbf{z}}_{C}.
\end{cases}
\end{equation*}
The right half is described by the free formulation
\begin{equation*}
\text{Beam 2} \quad 
\begin{cases}
\mathbf{E}_{F} \dot{\mathbf{x}}_{F} = \mathbf{J}_{F}(\mathbf{x}_{F}) {\mathbf{z}}_{F} + \mathbf{B}_{F, l}{\mathbf{u}_{F, l}} + \mathbf{B}_{F, r} \mathbf{u}_{F, r}, \\
{\mathbf{y}_{F, l}}= \mathbf{B}_{F, l}^\top {\mathbf{z}}_{F}, \\
\mathbf{y}_{F, r} = \mathbf{B}_{F, r}^\top {\mathbf{z}}_{F}.
\end{cases}
\end{equation*}
The gyrator (or feedback) interconnection 
\begin{equation*}
\begin{aligned}
    {\mathbf{u}_{C, r}} &= {\mathbf{y}_{F, l}}, \\ 
    {\mathbf{u}_{F, l}} &= -{\mathbf{y}_{C, r}},
\end{aligned} 
\end{equation*}
represents essentially Newton's third law: The linear and angular velocity is the same on both sides and the forces and torque are equal and opposite.
The corresponding block-diagram is represented in Fig. \ref{fig:four_bar_blockdiagram}. The resulting interconnected system can be written as 
\begin{equation*}
\begin{aligned}
\begin{bmatrix}
\mathbf{E}_{C}  & 0 \\
0 & \mathbf{E}_{F} 
\end{bmatrix} 
\begin{pmatrix}
\dot{\mathbf{x}}_{C} \\
\dot{\mathbf{x}}_{F} \\
\end{pmatrix} &= 
\begin{bmatrix}
\mathbf{J}_{C}(\mathbf{x}_{C})  & +\mathbf{B}_{C, r} \mathbf{B}_{F, l}^\top \\
-\mathbf{B}_{F, l} \mathbf{B}_{C, r}^\top & \mathbf{J}_{F}(\mathbf{x}_F) 
\end{bmatrix} 
\begin{pmatrix}
\mathbf{z}_C \\
\mathbf{z}_F \\
\end{pmatrix} + 
\begin{bmatrix}
    \mathbf{B}_{C,l} & 0 \\
    0 & \mathbf{B}_{F, r}
\end{bmatrix}
\begin{pmatrix}
    \mathbf{u}_{C, l} \\
    \mathbf{u}_{F, r}
\end{pmatrix}, \\
\begin{pmatrix}
    \mathbf{y}_{C, l} \\
    \mathbf{y}_{F, r}
\end{pmatrix} &= 
\begin{bmatrix}
    \mathbf{B}_{C, l}^\top & 0 \\
    0 & \mathbf{B}_{F, r}^\top
\end{bmatrix}
\begin{pmatrix}
\mathbf{z}_C \\
\mathbf{z}_F \\
\end{pmatrix}.
\end{aligned}
\end{equation*}
Note that this is again a port-Hamiltonian system with skew-symmetric structure matrix. To this end, all beneficial properties of this system class are inherited by larger multibody systems constructed with our approach. In the case of homogeneous boundary conditions, meaning that the beam is clamped at at the left extremity and free and unforced at the right, the inputs are simply set to zero, i.e.
\begin{equation*}
\begin{aligned}
    \mathbf{u}_{C, l}&=0, \qquad \text{The beam is clamped at the left extremity,} \\
    \mathbf{u}_{F, r}&=0, \qquad \text{No forces and torques are applied at the right extremity.}    
\end{aligned}
\end{equation*}

\subsection{A kinematic loop: four bar mechanism}\label{sec:four_bar}
The four bar mechanism \cite{kinon2025energy} is a simple example of a closed kinematic chain.  Beams 1 and 3 are connected to the ground through revolute support joints, cf. Fig.~\ref{fig:four_bar}. The proposed methodology makes it possible to represent this system without making use of algebraic multipliers. For this systems, as the bending moments are zero at the joints, the bending moment input is set to zero for all models meaning $\mathbf{u}_m=0$. Then, the inputs and outputs will either represent linear velocities or forces. To explain the interconnection, consider the dynamics of three beams separately. By considering the pinned case for beam 1 and beam 3 as well as the free-free case for beam 2 we obtain the representation
\begin{figure}[htb]
    \centering
    \includegraphics[width=0.5\textwidth]{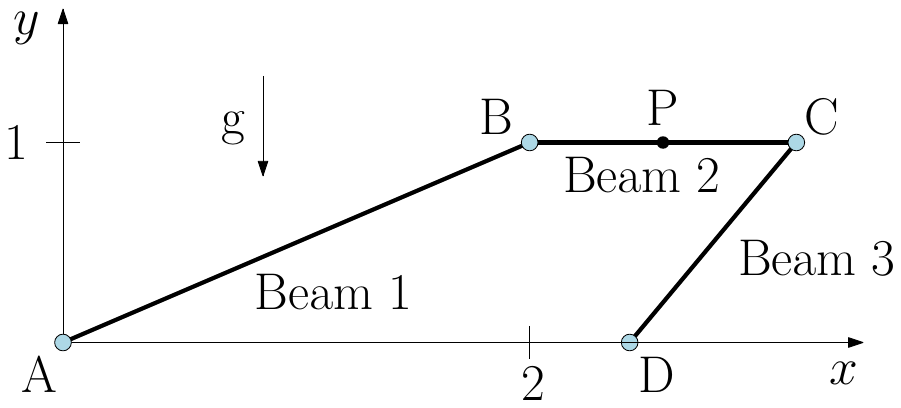}
    \caption{Four bar mechanism}
    \label{fig:four_bar}
\end{figure}
\begin{equation*}
\begin{aligned}
&\text{Beam 1} \quad 
\begin{cases}
\mathbf{E}_{P1} \dot{\mathbf{x}}_{P1} = \mathbf{J}_{P1}(\mathbf{x}_{P1}) {\mathbf{z}}_{P1} + \mathbf{B}_{P1, vl}
{\mathbf{u}_{P1, vl}} + \mathbf{B}_{P1, vr}{\mathbf{u}_{P1, vr}}, \\
\mathbf{y}_{P1, nl} = \mathbf{B}_{P1, vl}^\top {\mathbf{z}}_{P1}, \\
{\mathbf{y}_{P1, nr}} = \mathbf{B}_{P1, vl}^\top {\mathbf{z}}_{P1}, 
\end{cases} \\
&\text{Beam 2} \quad 
\begin{cases}
\mathbf{E}_{F2} \dot{\mathbf{x}}_{F2} = \mathbf{J}_{F2}({\mathbf{x}}_{F2}){\mathbf{z}}_{F2} + \mathbf{B}_{F2, nl}{\mathbf{u}_{F2, nl}} + \mathbf{B}_{F2, nr}{\mathbf{u}_{F2, nr}}, \\
{\mathbf{y}_{F2, vl}} = \mathbf{B}_{F2, nl}^\top {\mathbf{z}}_{F2}, \\
{\mathbf{y}_{F2, vr}} = \mathbf{B}_{F2, nr}^\top {\mathbf{z}}_{F2}, \\
\end{cases} \\
&\text{Beam 3} \quad 
\begin{cases}
\mathbf{E}_{P3} \dot{\mathbf{x}}_{P3} = \mathbf{J}_{P3}(\mathbf{x}_{P3}) {\mathbf{z}}_{P3} + \mathbf{B}_{P3, vl}
{\mathbf{u}_{P3, vl}} + \mathbf{B}_{P3, vr}{\mathbf{u}_{P3, vr}}, \\
{\mathbf{y}_{P3, nl}} = \mathbf{B}_{P3, vl}^\top {\mathbf{z}}_{P3}, \\
\mathbf{y}_{P1, nr} = \mathbf{B}_{P3, vl}^\top {\mathbf{z}}_{P3}.
\end{cases}
\end{aligned}
\end{equation*}
where $\mathbf{u}_{Pi, vl},\; \mathbf{u}_{Pi, vr}$ denote the left, respectively right, velocity input for the pinned $i$-th beam ($i={1,3}$) and $\mathbf{u}_{F2, nl},\; \mathbf{u}_{F2, nr}$ denote the left, respectively right, force input for the free $2$nd free beam. The conjugated outputs $\mathbf{y}_{Pi, vl},\; \mathbf{y}_{Pi, vr}$ represent the left, respectively right, force output for the pinned $i$-th beam and $\mathbf{y}_{F2, nl},\; \mathbf{y}_{F2, nr}$ represent the left, respectively right, velocity output for the $2$nd free beam. The corresponding block-diagram is shown in Fig. \ref{fig:four_bar_blockdiagram}.
Pinning the beams 1 and 3 to the ground at point A and D, respectively, is then expressed in terms of vaninishing velocities as
\begin{equation}
\begin{aligned}
    {\mathbf{u}_{P1, vl}} &= 0, \\
    {\mathbf{u}_{P3, vr}} &= 0. \\
\end{aligned}
\end{equation}
The revolute joint connections at point B and C require the orientation of the adjacent beams such that
\begin{equation*}
\begin{aligned}
    \bm{\Lambda}(\theta_{P1}(L_1, t)){\mathbf{u}_{P1, vr}} &= +\bm{\Lambda}(\theta_{F2}(0, t)){\mathbf{y}_{F2, vl}}, \\ 
    \bm{\Lambda}(\theta_{F2}(0, t)){\mathbf{u}_{F2, nl}} &= -\bm{\Lambda}(\theta_{P1}(L_1, t)){\mathbf{y}_{P1, nr}}, \\
    \bm{\Lambda}(\theta_{P3}(0, t)){\mathbf{u}_{P3, vl}} &= +\bm{\Lambda}(\theta_{F2}(L_2, t)){\mathbf{y}_{F2, vr}}, \\ 
    \bm{\Lambda}(\theta_{F2}(L_2, t)){\mathbf{u}_{F2, nr}} &= -\bm{\Lambda}(\theta_{P3}(0, t)){\mathbf{y}_{P3, nl}},
\end{aligned} 
\end{equation*}
While the first and third equation constitute equal spatial velocities in the points B and C, respectively, the second and fourth row correspond to opposite spatial forces. The overall interconnected system can eventually be written as
\begin{equation*}
\begin{aligned}
\mathrm{Diag}
\begin{bmatrix}
\mathbf{E}_{P1} \\
\mathbf{E}_{F2} \\
\mathbf{E}_{P3} \\
\end{bmatrix} 
\begin{pmatrix}
\dot{\mathbf{x}}_{P1} \\
\dot{\mathbf{x}}_{F2} \\
\dot{\mathbf{x}}_{P3} \\
\end{pmatrix} &= 
\begin{bmatrix}
\mathbf{J}_{P1}({\mathbf{x}}_{P1})  & +\mathbf{B}_{P1, vr} \bm{\Lambda}(\alpha) \mathbf{B}_{F2, nl}^\top & 0 \\
-\mathbf{B}_{F2, nl} \bm{\Lambda}^\top(\alpha) \mathbf{B}_{P1, vr}   & \mathbf{J}_{F2}({\mathbf{x}}_{F2}) & -\mathbf{B}_{F2, nr} \bm{\Lambda}^\top(\beta) \mathbf{B}_{P3, vl}^\top \\
0 & \mathbf{B}_{P3, vl} \bm{\Lambda}(\beta) \mathbf{B}_{F2, nr}^\top & \mathbf{J}_{P3}({\mathbf{x}}_{P3})
\end{bmatrix} 
\begin{pmatrix}
\mathbf{z}_{P1} \\
\mathbf{z}_{F2} \\
\mathbf{z}_{P3} \\
\end{pmatrix}
\end{aligned},
\end{equation*}
where
$$
\alpha(t) = \theta_{F2}(0, t) - \theta_{P1}(L_1, t), \qquad  
\beta(t) = \theta_{F2}(L_2, t) - \theta_{P3}(0, t),
$$
denote relative rotation angles.
\begin{figure}[htb] 
\centering 
\includegraphics[width=0.95\textwidth]{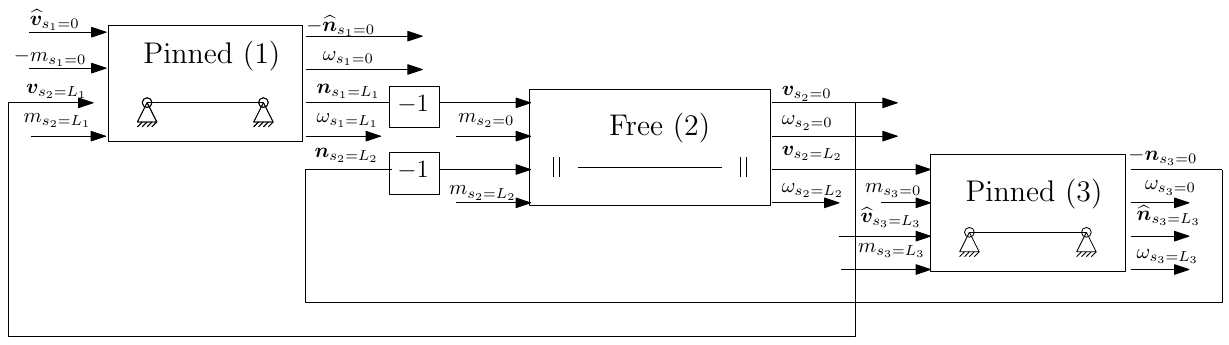} 
\caption{Block diagram for the four bar mechanism}
\label{fig:four_bar_blockdiagram} 
\end{figure} 

\section{Time integration}\label{sec:time_integration}
After interconnection, the final multibody system is written in a similar form to Eq. \ref{eq:discrete_nonlinear}, with the important difference that the matrix $\mathbf{J}$ now may depend on the rotation angles $\bm{\theta}$ because of the presence of joints, as
\begin{equation} \label{eq:discrete_nonlinear_multibody}
\begin{bmatrix}
    \mathbf{M}^{\mathbf{q}} & 0 \\
    0 & \mathbf{M}^{\mathbf{e}}
\end{bmatrix} 
\odv{}{t}
\begin{pmatrix}
    \mathbf{q}\\
    \mathbf{e}
\end{pmatrix} = 
\begin{bmatrix}
    0 & \mathbf{P}(\bm{\theta}) \\
    -\mathbf{P}^\top(\bm{\theta}) & \mathbf{J}(\mathbf{e}, \bm{\theta})
\end{bmatrix}
\begin{pmatrix}
    \mathbf{z}_{\mathbf{q}} \\
    \mathbf{e}
\end{pmatrix} + 
\begin{bmatrix}
    0 \\
    \mathbf{B}^\mathbf{e}
\end{bmatrix}\mathbf{u},
\end{equation}
where now the matrix $\mathbf{J}(\mathbf{e}, \bm{\theta}) = - \mathbf{J}^\top(\mathbf{e}, \bm{\theta})$ can be decomposed into the local dynamics of each subsystem and a part due to the interconnection
$$
\mathbf{J}(\mathbf{e}, \bm{\theta}) = \mathbf{J}^\mathbf{e}(\mathbf{e}) + \mathbf{J}^{\mathrm{int}}(\bm{\theta}), \qquad \mathbf{J}^\mathbf{e} = \mathrm{Diag}(\mathbf{J}^\mathbf{e}_1, \mathbf{J}^\mathbf{e}_2, \dots, \mathbf{J}^\mathbf{e}_{N_\mathrm{systems}}), 
$$
where $\mathbf{J}^\mathbf{e}$ is the block-diagonal concatenation of the matrices of each subsystem whereas $\mathbf{J}^{\mathrm{int}}$ contains all the coupling terms. All other matrices are obtained by concatenation of matrices of the single subsystems just like $\mathbf{J}^\mathbf{e}$. When including the gravity effect, the Hamiltonian is quadratic in $\mathbf{e}$ and linear in $\mathbf{q}$.

Therefore the energy can be exactly preserved using the implicit midpoint scheme. This leads to the following system, where $\Delta t$ is the constant time step size,

\begin{equation}\label{eq:discrete_time}
\begin{bmatrix}
    \mathbf{M}^{\mathbf{q}} & 0 \\
    0 & \mathbf{M}^{\mathbf{e}}
\end{bmatrix} 
\begin{pmatrix}
    \mathbf{q}_{n+1} - \mathbf{q}_{n} \\
    \mathbf{e}_{n+1} - \mathbf{e}_{n}
\end{pmatrix} = 
\Delta t
\begin{bmatrix}
    0 & \mathbf{P}(\bm{\theta}_{n+1/2}) \\
    -\mathbf{P}^\top(\bm{\theta}_{n+1/2}) & \mathbf{J}(\mathbf{e}_{n+1/2}, \bm{\theta}_{n+1/2})
\end{bmatrix}
\begin{pmatrix}
    \mathbf{z}_{\mathbf{q}} \\
    \mathbf{e}_{n+1/2}
\end{pmatrix} + 
\Delta t
\begin{bmatrix}
    0 \\
    \mathbf{B}^\mathbf{e}
\end{bmatrix}\mathbf{u}_{n+1/2},
\end{equation}
where $\mathbf{z}_{\mathbf{q}} = \nabla_{\mathbf{q}} U$ is a constant gravity force. The notation $\mathbf{x}_{n} \approx \mathbf{x}(t_n), \; \mathbf{x}_{n+1} \approx \mathbf{x}(t_{n+1})$ denotes sampling approximations at the current or next time instance, whereas $\mathbf{x}_{n+1/2} := \frac{1}{2}(\mathbf{x}_{n} + \mathbf{x}_{n+1})$. Notice that the evolution of the centerline position can be reconstructed a posteriori from the knowledge of $\bm{\theta}, $ and $\bm{v}$ by integrating
\begin{equation}\label{eq:discrete_time_positions}
\mathbf{M}^{\mathbf{r}} (\mathbf{r}_{n+1} - \mathbf{r}_{n+1}) = \Delta t \mathbf{R}(\theta_{n+1/2}) \mathbf{v}_{n+1/2}.    
\end{equation}

This is the time discrete version of the first block of differential equations in system \eqref{eq:discrete_nonlinear_multibody}. The matrix $\mathbf{R}(\bm{\theta})$ can be interpreted as a discretization of the rotation matrix and it is given by 
$$
\mathbf{R}(\bm{\theta}) = \int_\Omega \begin{bmatrix}
    \bm{\xi}_r^\top & 0 \\
    0 & \bm{\xi}_r^\top
\end{bmatrix} \bm{\Lambda}(\theta_h) \begin{bmatrix}
    \bm{\xi}_r & 0 \\
    0 & \bm{\xi}_r
\end{bmatrix} \d\Omega,
$$
where $\bm{\xi}_r$ is the finite element ansatz matrix for the centerline position and the linear velocity.  

\begin{proposition}
When $\mathbf{u}_{n+1/2}=0$ the discrete energy $H(\mathbf{x}_{n}) = \frac{1}{2}\mathbf{e}_{n}^\top \mathbf{M}^{\mathbf{e}} \mathbf{e}_{n}  + \rho g A \mathbf{a}^\top \mathbf{r}_{y, n}$ is preserved. If instead the system is forced by an external input, the following power balance is obtained
$$
\frac{H(\mathbf{x}_{n}) - H(\mathbf{x}_{n+1})}{\Delta t} =  \mathbf{y}_{n+1/2}^\top \mathbf{u}_{n+1/2}, \qquad  \mathbf{y}_{n+1/2}:= (\mathbf{B}^{\mathbf{e}})^\top \mathbf{e}_{n+1/2}.
$$
 
\end{proposition}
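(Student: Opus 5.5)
The plan is to test the discrete scheme \eqref{eq:discrete_time} against the midpoint value of the co-energy variables, in the same way as in the continuous computation of $\dot H_{\rm sm}$ in Sec.~\ref{sec:formulations}. Because $H$ is quadratic in $\mathbf{e}$ and linear in $\mathbf{q}$, the midpoint rule reproduces the energy increment exactly.

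First I split the energy increment into its two parts. For the quadratic part, symmetry of $\mathbf{M}^{\mathbf{e}}$ gives
$$
\tfrac12 \mathbf{e}_{n+1}^\top\mathbf{M}^{\mathbf{e}}\mathbf{e}_{n+1}-\tfrac12 \mathbf{e}_{n}^\top\mathbf{M}^{\mathbf{e}}\mathbf{e}_{n}=\mathbf{e}_{n+1/2}^\top\mathbf{M}^{\mathbf{e}}(\mathbf{e}_{n+1}-\mathbf{e}_n).
$$
For the gravity part, $\rho g A\,\mathbf{a}^\top(\mathbf{r}_{y,n+1}-\mathbf{r}_{y,n})$ is rewritten with identity \eqref{eq:gradient_potential}, $\mathbf{a}=\mathbf{M}^{\mathbf{r}_y}\mathbf{1}$, and the definition \eqref{eq:z_vector} of $\mathbf{z}_{\mathbf{q}}$. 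This gives $\mathbf{z}_{\mathbf{q}}^\top\mathbf{M}^{\mathbf{q}}(\mathbf{q}_{n+1}-\mathbf{q}_n)$, using that $\mathbf{M}^{\mathbf{q}}$ is symmetric and block diagonal, so only the $\mathbf{r}_y$ rows of $\mathbf{z}_{\mathbf{q}}$ contribute. This is the discrete counterpart of $\mathbf{E}^\top\mathbf{z}=\nabla_{\mathbf{x}}H$ in \eqref{eq:ham_approximation}. Altogether,
$$
H(\mathbf{x}_{n+1})-H(\mathbf{x}_n)=\begin{pmatrix}\mathbf{z}_{\mathbf{q}}\\ \mathbf{e}_{n+1/2}\end{pmatrix}^\top \mathbf{E}\begin{pmatrix}\mathbf{q}_{n+1}-\mathbf{q}_n\\ \mathbf{e}_{n+1}-\mathbf{e}_n\end{pmatrix}.
$$

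Next I substitute the right-hand side of \eqref{eq:discrete_time}. The block matrix built from $\mathbf{P}(\bm\theta_{n+1/2})$ and $\mathbf{J}(\mathbf{e}_{n+1/2},\bm\theta_{n+1/2})$ is skew-symmetric for every argument. This holds because each subsystem's $\mathbf{J}^{\mathbf{e}}$ is skew, and the gyrator coupling blocks of Sec.~\ref{sec:interconnection} appear in pairs $\pm\mathbf{B}\bm\Lambda\mathbf{B}^\top$ and their transposes. Its quadratic form on $(\mathbf{z}_{\mathbf{q}},\mathbf{e}_{n+1/2})$ therefore vanishes; the gravity–kinematics cross terms $\pm\mathbf{z}_{\mathbf{q}}^\top\mathbf{P}\mathbf{e}_{n+1/2}$ cancel as well. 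Only the port term survives:
$$
\Delta t\,\mathbf{e}_{n+1/2}^\top\mathbf{B}^{\mathbf{e}}\mathbf{u}_{n+1/2}=\Delta t\,\mathbf{y}_{n+1/2}^\top\mathbf{u}_{n+1/2}.
$$
With $\mathbf{u}_{n+1/2}=0$ this yields exact conservation, which is the first claim.

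The main obstacle is the sign in the forced case. The computation above gives the supplied power as $(H(\mathbf{x}_{n+1})-H(\mathbf{x}_n))/\Delta t$. To obtain the ordering written in the statement, I would fix the port orientation to be outgoing: $\mathbf{u}$ is the boundary reaction that the beam exerts on its environment. This matches the sign conventions $\mp\mat{n}|_{\partial\Omega}$, $\mp m|_{\partial\Omega}$ used for the inputs in the free and clamped subsystems, and the Newton's-third-law sign in the feedback interconnection of Sec.~\ref{sec:interconnection}. Under that convention, $\mathbf{y}_{n+1/2}^\top\mathbf{u}_{n+1/2}$ is the power flowing out of the system, i.e.\ the energy decrease $(H(\mathbf{x}_n)-H(\mathbf{x}_{n+1}))/\Delta t$ per step.

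I am uncertain whether this convention is consistent with \eqref{eq:discrete_time}, where the input enters as $+\Delta t\,\mathbf{B}^{\mathbf{e}}\mathbf{u}_{n+1/2}$. I expect it may only be achievable by flipping that sign, i.e.\ by taking $-\mathbf{u}$ as the applied input. I would check this carefully against the definitions of $\mathbf{u}_n$ and $\mathbf{u}_m$ in Sec.~\ref{sec:spatial_discretization_freefree}, since the whole second claim hinges on it.
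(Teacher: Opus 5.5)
Your argument is the paper's proof. You test \eqref{eq:discrete_time} with $(\mathbf{z}_{\mathbf{q}},\mathbf{e}_{n+1/2})$, remove the structure-matrix term by skew-symmetry, and turn $\mathbf{z}_{\mathbf{q}}^\top\mathbf{M}^{\mathbf{q}}(\mathbf{q}_{n+1}-\mathbf{q}_n)$ into $\rho g A\,\mathbf{a}^\top(\mathbf{r}_{y,n+1}-\mathbf{r}_{y,n})$ via $\mathbf{M}^{\mathbf{r}_y}\mathbf{1}=\mathbf{a}$. This settles the unforced conservation claim completely. For the forced case, your identity $(H(\mathbf{x}_{n+1})-H(\mathbf{x}_n))/\Delta t=\mathbf{y}_{n+1/2}^\top\mathbf{u}_{n+1/2}$ is what the scheme actually gives. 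Because it follows from \eqref{eq:discrete_time} itself, the ordering printed in the statement can hold only when $\mathbf{y}_{n+1/2}^\top\mathbf{u}_{n+1/2}=0$. So the statement has a sign slip, and the paper's ``analogous computation'' never addresses it.

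Drop your attempted rescue through an ``outgoing'' reading of $\mathbf{u}$, because it contradicts the paper's definitions. The signs $-\mat{n}(0),+\mat{n}(L)$ and $-m(0),+m(L)$ in $\mathbf{u}_n,\mathbf{u}_m$ come from the boundary pairing $\dualpr[\partial\Omega]{f}{g}=f(L)g(L)-f(0)g(0)$. Hence $\mathbf{y}^\top\mathbf{u}=\dualpr[\partial\Omega]{\mat{v}}{\mat{n}}+\dualpr[\partial\Omega]{\omega}{m}$, which is exactly the power supplied by the environment in $\dot H_{\rm m}$. For instance, the end load in the flying spaghetti test starts the beam from rest, so it must raise $H$. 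The printed balance would need the input to enter \eqref{eq:discrete_time} with a minus sign. With the scheme as written, state and prove your version.
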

\begin{proof}
     Premultipling the left and right side of Eq. \eqref{eq:discrete_time} by $(\mathbf{z}_{\mathbf{q}}, \; \mathbf{e}_{n+1/2}$), it is obtained that
\begin{equation}
    \begin{pmatrix}
    \mathbf{z}_{\mathbf{q}} \\
    \mathbf{e}_{n+1/2}
\end{pmatrix}^\top \begin{bmatrix}
    \mathbf{M}^{\mathbf{q}} & 0 \\
    0 & \mathbf{M}^{\mathbf{e}}
\end{bmatrix} 
\begin{pmatrix}
    \mathbf{q}_{n+1} - \mathbf{q}_{n} \\
    \mathbf{e}_{n+1} - \mathbf{e}_{n}
\end{pmatrix} = 0, \label{eq_proof_1}
\end{equation}
as the first matrix on the right hand side of \eqref{eq:discrete_time} is skew-symmetric. Multiplying Eq.~\eqref{eq:gradient_potential} by $(\mathbf{r}_{y, n+1} - \mathbf{r}_{y, n})$ and using the symmetry of $\mathbf{M}^{\mathbf{r}_y}$ yields 
\begin{equation}
    \bm{1}^\top \mathbf{M}^{\mathbf{r}_y}(\mathbf{r}_{y, n+1} - \mathbf{r}_{y, n}) = \mathbf{a}^\top (\mathbf{r}_{y, n+1} - \mathbf{r}_{y, n}). \label{eq_proof_2}
\end{equation}

Using the relation \eqref{eq:z_vector} together with \eqref{eq_proof_2}, one can rewrite \eqref{eq_proof_1} as the following discrete energy conservation balance 
$$
\frac{1}{2}\mathbf{e}_{n}^\top \mathbf{M}^{\mathbf{e}} \mathbf{e}_{n}  + \rho g A \mathbf{a}^\top \mathbf{r}_{y, n} = \frac{1}{2}\mathbf{e}_{n+1}^\top \mathbf{M}^{\mathbf{e}} \mathbf{e}_{n+1}  + \rho g A \mathbf{a}^\top \mathbf{r}_{y, n+1}.
$$
That is, $H(\mathbf{x}_{n}) = H(\mathbf{x}_{n+1})$.  The power balance above is obtained by an analogous computation. 
\end{proof}
\begin{remark}
In the material formulation, the angular momentum becomes a nonlinear expression because of the rotation matrix. Its expression is 
$$
\begin{aligned}
J_n &= \inpr[\Omega]{\bm{r}_{h, n}}{\rho A \bm{S}\bm{\Lambda}(\theta_{h, n})\mat{v}_{h, n}} + \inpr[\Omega]{1}{\rho I \omega_n}, 
= \rho A \mathbf{r}_n^\top \mathbf{S}\mathbf{R}(\bm{\theta}_n)\mathbf{v}_n + \rho I \mathbf{1}^\top\mathbf{M}^\omega \mathbf{w}_n, \qquad \mathbf{S}:= \begin{bmatrix}
    0 & \mathbf{I} \\
    -\mathbf{I} & 0
\end{bmatrix}.    
\end{aligned}
$$
Therefore, the implicit midpoint scheme does not preserve the angular momentum. In the spatial-material formulation, however, this can be achieved, cf.~\cite{kinon2025energy}. 

\end{remark}

\begin{remark}
The recursion \eqref{eq:discrete_time} can be equivalently written as
$$
\begin{pmatrix}
    \mathbf{q}_{n+1} \\
    \mathbf{e}_{n+1}
\end{pmatrix} = \mathrm{Cay}\left(\frac{\Delta t}{2}\mathbf{E}^{-1}\mathbf{J}(\mathbf{x}^{n+1/2})\right)
\begin{pmatrix}
    \mathbf{z}_{\mathbf{q}} \\
    \mathbf{e}_{n}
\end{pmatrix} +  \left(\mathbf{E} - \frac{\Delta t}{2}\mathbf{J}(\mathbf{x}^{n+1/2})\right)^{-1} \Delta t\mathbf{B}\mathbf{u}_{n+1/2},
$$
where $\mathrm{Cay}(\mathbf{A}) := (\mathbf{I}-\mathbf{A})^{-1}(\mathbf{I}+\mathbf{A})$ denotes the Cayley transform of the matrix $\mathbf{A}$.  
\end{remark}

\section{Numerical examples}\label{sec:num_examples}

In this section, the performance of our approach is assessed on four different examples:
\begin{itemize}
    \item The flying spaghetti benchmark \cite{simo1986flying}. This example is used to verify the proposed mixed finite element discretization and its properties.
    \item A pendulum with a lumped mass attached \cite{ljukovac2025reissner}. This simple example shows how the overall model can be constructed by interconnecting the pendulum bar and the mass as two subsystems. We compare our results with the literature.
    \item An L-shaped frame \cite{betsch2001conservation}.
    This example demonstrates how the proposed interconnection strategy combines a free-free model with a cantilever beam, cf. Sec \ref{sec:interconnection}.
    \item A four bar mechanism. This example demonstrates that the proposed interconnection strategy can be used on closed loop kinematic chains as well. 
\end{itemize}

For all examples we check energy conservation as discussed in the previous Section. This is a common feature of all models discussed in this paper. The code is implemented in \textsc{python}. To implement the Newton method for the implicit midpoint method, the \textsc{jax} library \cite{jax2018} is used to compute the Jacobian using automatic differentiation. 

\subsection{Flying spaghetti}

The first example is concerned with the free motion of a floating beam. This is the so-called \textit{flying spaghetti} problem original proposed in \cite{simo1986flying}. The objective of the test is to validate the novel mixed finite element discretization strategy and verify the energy conservation properties induced by the time integration approach, see Sec \ref{sec:time_integration}. Note that we make use of the free-free model according to Sec. \ref{sec:spatial_discretization_freefree}. Initially in an inclined configuration (see Fig. \ref{fig:flying_spaghetti_0}), the beam is subject to the following boundary conditions: a free end at $s = 0$ and a nodal force and torque at $s =L$, $\bm{n}|_{s = L} = \bm{n}_0, \;  m|_{s = L} = m_0$. These are applied for $2.5 \; \mathrm{[s]}$ and given by the expressions
$$
\bm{n}_0(t) = \begin{cases}
    8 \bm{b}_1 \quad &\text{for } t\le 2.5 \; \mathrm{[s]}, \\
    0 \quad &\text{for } t > 
    2.5 \; \mathrm{[s]}, \\
\end{cases} \qquad 
{m}_0(t) = \begin{cases}
    -80 \quad &\text{for } t\le 2.5 \; \mathrm{[s]}, \\
    0 \quad &\text{for } t > 
    2.5 \; \mathrm{[s]}. \\
\end{cases}
$$

After $2.5 \; \mathrm{[s]}$, the system is isolated and the beam floats freely. The physical parameters, reported in Tab. \ref{tab:parameters_spaghetti}, are the same as in \cite{simo1986flying}, albeit non physical since $EA/EI = \rho A/ \rho I$ does not hold true.
The configurations of the beam at different time instants are shown together with a colorcoding for the bending moment in Fig. \ref{fig:configurations_flying_spaghetti} and they agree with the results reported in \cite[Fig. 4]{simo1986flying}. The trend of the energy and its increments are shown in Fig. \ref{fig:energy_flying_spaghetti}. It can be noticed that for $t>2.5 \; \mathrm{[s]}$  the energy is preserved up to machine precision. The results qualitatively coincide with the ones reported by \cite{kinon2025energy}.

\begin{figure}[htb]
    \centering
    \includegraphics[width=0.35\linewidth]{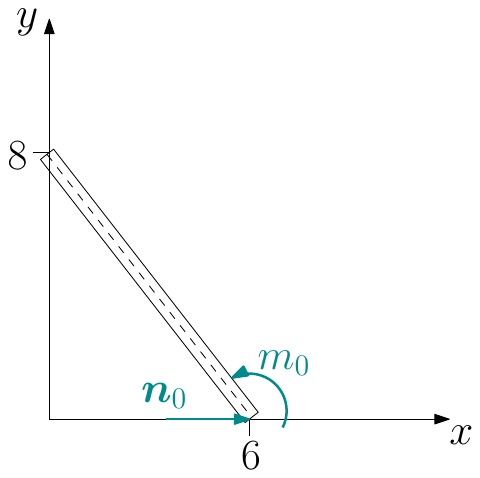}
    \caption{Initial configuration for the flying spaghetti}
    \label{fig:flying_spaghetti_0}
\end{figure}

\begin{table}[htb]
    \centering
    \begin{tabular}{ccccccccc}
        \hline
        $\Delta t$ [s] & $T$ [s] & $N_{\mathrm{el}}$ [-] & $L$ [m] & $\rho A$ [kg/m] & $\rho I$ [kg\,m] & $EA=GA$ [N] & $EI$ [N\,m$^2$] \\
        \hline
        0.1 & 15 & 10 & 10 & 1 & 10 & $10^4$ & 100 \\
        \hline
    \end{tabular}
    \caption{Parameters for flying spaghetti example}
    \label{tab:parameters_spaghetti}
\end{table}

\begin{figure}[htb]
    \centering
    \includegraphics[width=0.8\linewidth]{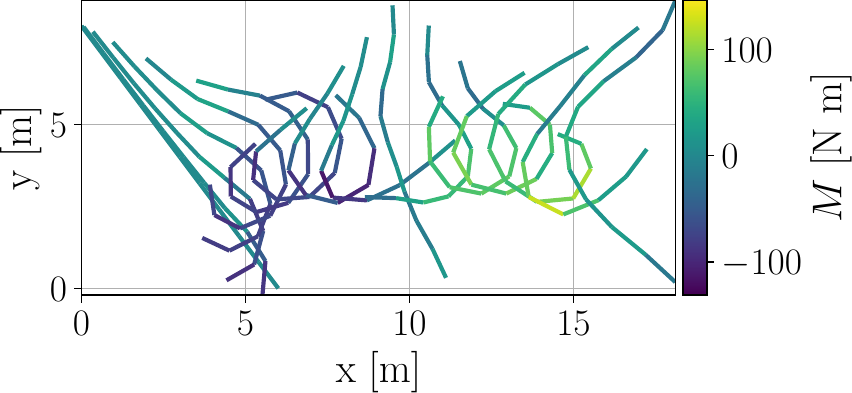}
    \caption{Configuration of the beam at different instants for $t \le 7.5$, after each 5 time increments}
    \label{fig:configurations_flying_spaghetti}
\end{figure}

\begin{figure}[htb]
    \centering
    \begin{subfigure}[b]{0.48\textwidth}
        \centering
        \includegraphics[width=\textwidth]{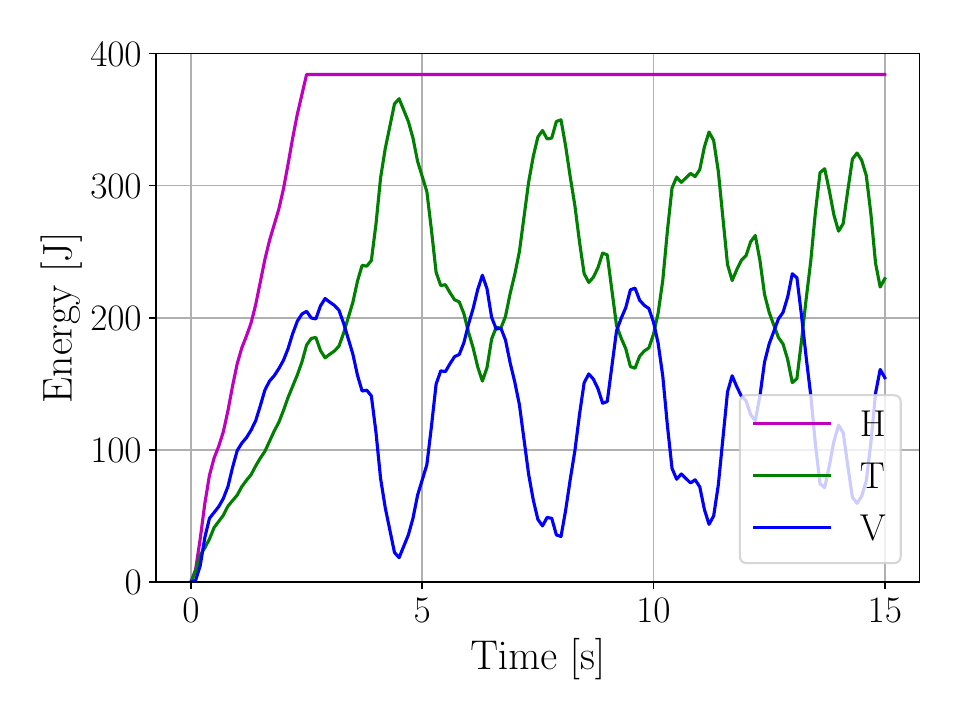}
    \end{subfigure}
    \hfill
    \begin{subfigure}[b]{0.48\textwidth}
        \centering
        \includegraphics[width=\textwidth]{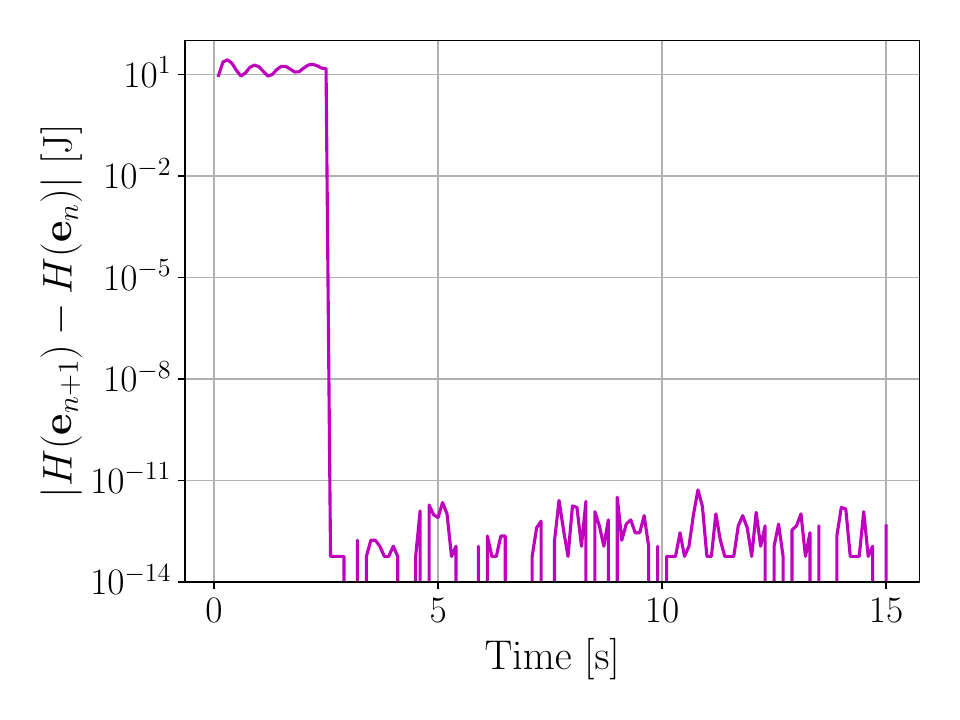}
    \end{subfigure}
    \caption{Evolution of the energy and its increments}
    \label{fig:energy_flying_spaghetti}
\end{figure}

\subsection{Flexible pendulum}
The next example is a simple pendulum under gravity,  cf. Fig. \ref{fig:sketch_pendulum}. The setup and parameters are the same as the ones used in \cite{ljukovac2025reissner}. The pendulum has length $L$ with a concentrated mass $m$ positioned at the free end which is represented with a single finite element of length $h=3.04$.  To obtain the system the pinned formulation is interconnected with the mass as explained in Sec. \ref{sec:interconnection}, cf Fig. \ref{fig:pendulum_blockdiagram}. The geometric and material properties are given in Tab. \ref{tab:parameters_pendulum}. For the simulation the initial angle is taken to be 
$\theta_0 = \pi/2$ whereas all the other parameters are set to zero. The results in terms of angle and angular velocity are shown in Fig. \ref{fig:theta_omega_pendulum} and they coincide with those obtained in \cite{ljukovac2025reissner}. The energy evolution and its increments are reported in Fig. \ref{fig:energy_pendulum_pendulum}. It can be noticed that the overall energy is preserved up to machine precision.

\begin{figure}[htb]
    \centering
    \includegraphics[width=0.8\linewidth]{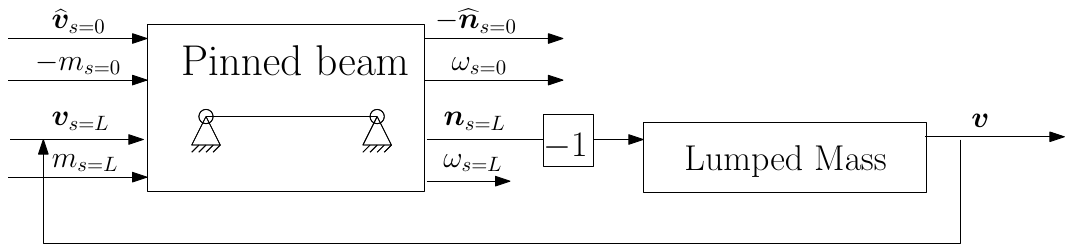}
    \caption{Pendulum with lumped mass as interconnected system}
    \label{fig:pendulum_blockdiagram}
\end{figure}

\begin{figure}[htb]
    \centering
    \includegraphics[width=0.25\linewidth]{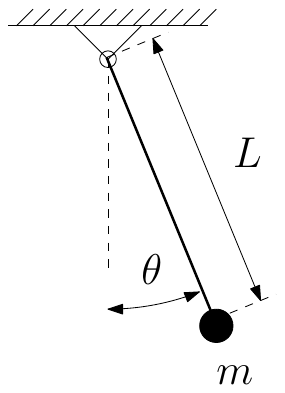}
    \caption{Pendulum}
    \label{fig:sketch_pendulum}
\end{figure}

\begin{table}[htb]
    \centering
    \begin{tabular}{ccccccccc}
        \hline
        $\Delta t$ [s] & $T$ [s] & $N_{\mathrm{el}}$ [-] & $L$ [m] & $\rho A$ [kg/m] & $\rho I$ [kg m] & $EA=GA$ [N] & $EI \; \mathrm{[N \; m^2]}$ & $m$ [kg] \\
        \hline
        0.1 & 5 & 1 & 3.04 & 0 & 0 & $10^{10}$ & $10^{10}$ & 10 \\
        \hline
    \end{tabular}
    \caption{Parameters for the flexible pendulum}
    \label{tab:parameters_pendulum}
\end{table}

\begin{figure}[htb]
    \centering
    \begin{subfigure}[b]{0.48\textwidth}
        \centering
        \includegraphics[width=\textwidth]{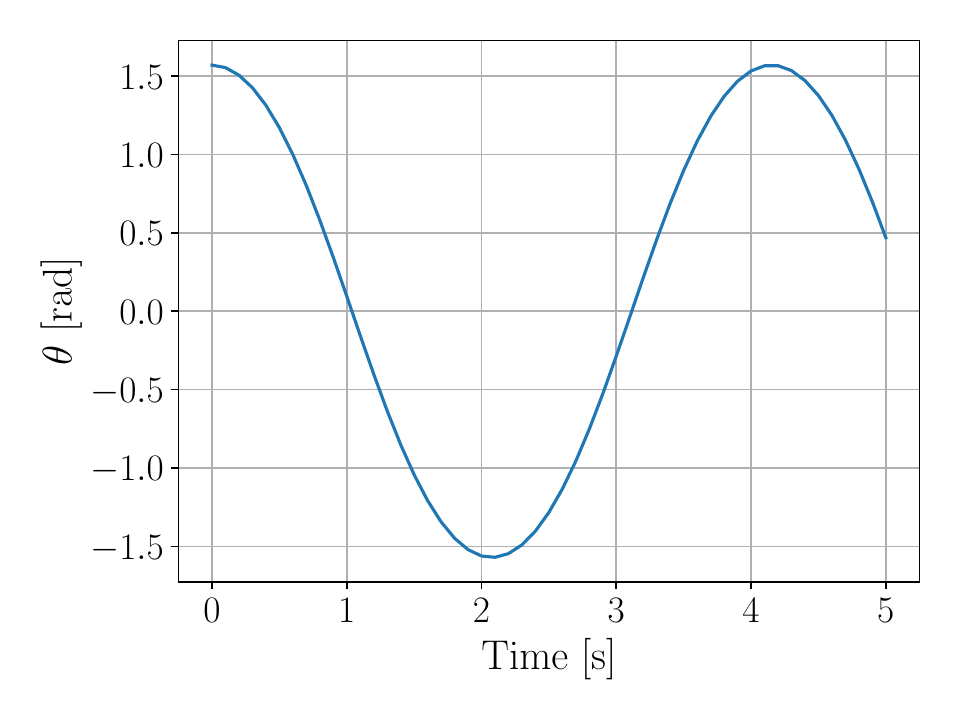}
    \end{subfigure}
    \hfill
    \begin{subfigure}[b]{0.48\textwidth}
        \centering
        \includegraphics[width=\textwidth]{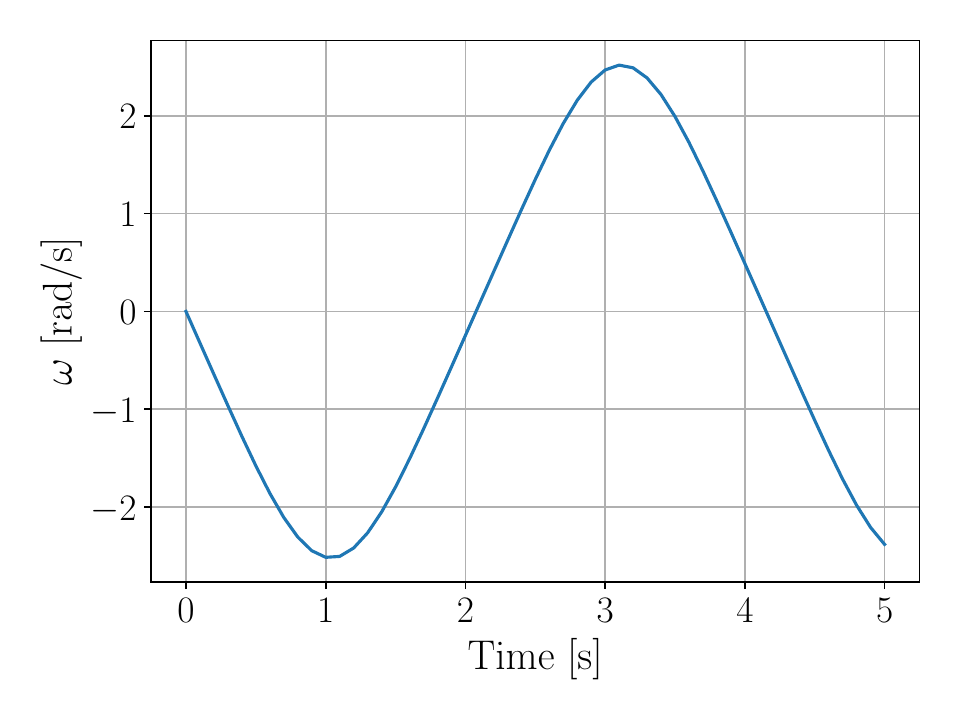}
    \end{subfigure}
    \caption{Evolution of rotation and of the angular velocity of the pendulum.}
    \label{fig:theta_omega_pendulum}
\end{figure}

\begin{figure}[htb]
    \centering
    \begin{subfigure}[b]{0.48\textwidth}
        \centering
        \includegraphics[width=\textwidth]{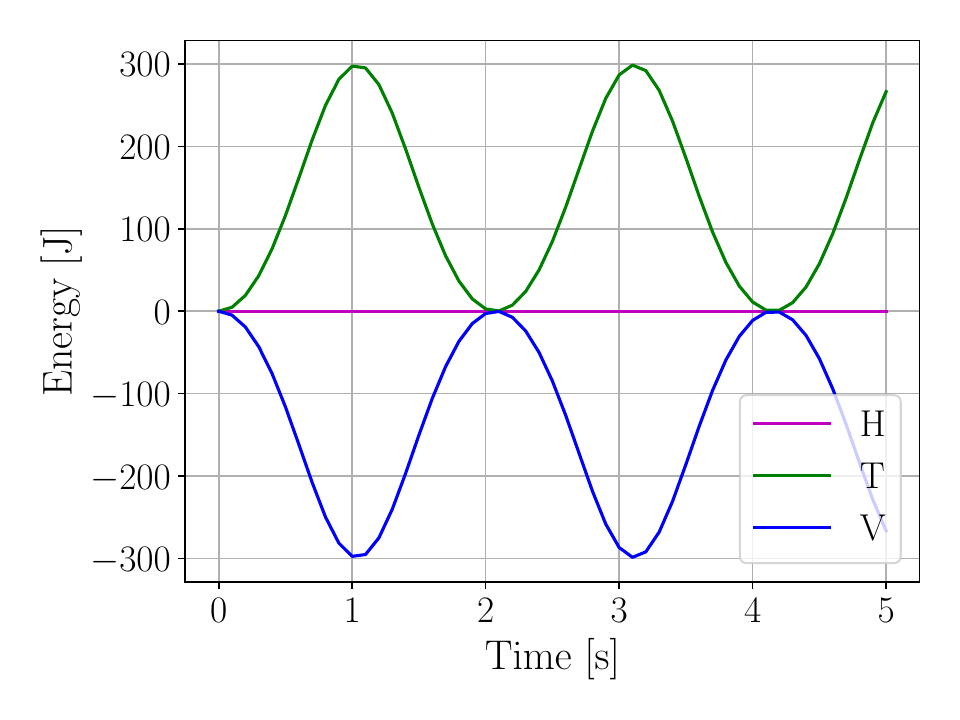}
    \end{subfigure}
    \hfill
    \begin{subfigure}[b]{0.48\textwidth}
        \centering
        \includegraphics[width=\textwidth]{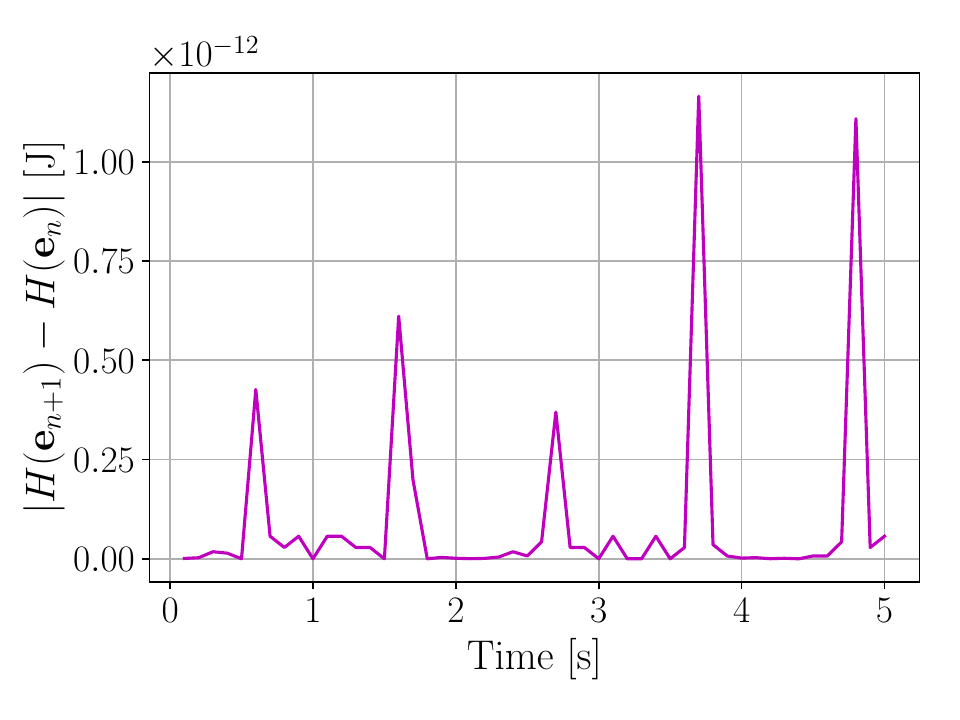}
    \end{subfigure}
    \caption{Evolution of energy and energy increments for the pendulum.}
    \label{fig:energy_pendulum_pendulum}
\end{figure}

\subsection{L-shaped frame}

We now consider the motion of a large frame. An analogous setting using geometrically exact beams has been considered in \cite{ljukovac2025reissner}, whereas in \cite{betsch2001conservation} the same problem is treated using Q4 elements (bilinear four node quadrilateral finite element). This system can be obtained via the interconnection of a free beam and a cantilever one, as illustrated in Fig. \ref{fig:Lshaped_blockdiagram}. The configuration is reported in Fig. \ref{fig:sketch_Lframe}, whereas the geometric and physical parameters are reported in Tab. \ref{tab:parameters_Lshaped_frame}. As in \cite{ljukovac2025reissner} two different scenarios are considered: a rigid frame and a more flexible one. For both cases a time step of $\Delta t = 0.1$ and $10$ finite elements per beam are considered. For the rigid case, the frame configurations at different instants are reported in Fig. \ref{fig:configurations_Lframe_stiff} and the energy signals and the incremental variation of the energy are reported in Fig. \ref{fig:energy_Lframe_stiff}. We realize the rigidity by setting compliance parameters in the compliance matrix to zero. The corresponding stress type quantities immediately act as Lagrange multipliers. For the soft case, the frame configurations at different instants are reported in Fig. \ref{fig:configurations_Lframe_soft} and the energy signals and the incremental variation of the energy are reported in Fig. \ref{fig:energy_Lframe_soft}. The results are well in accordance with the findings of \cite{ljukovac2025reissner}.

\begin{figure}
    \centering
    \includegraphics[width=0.8\linewidth]{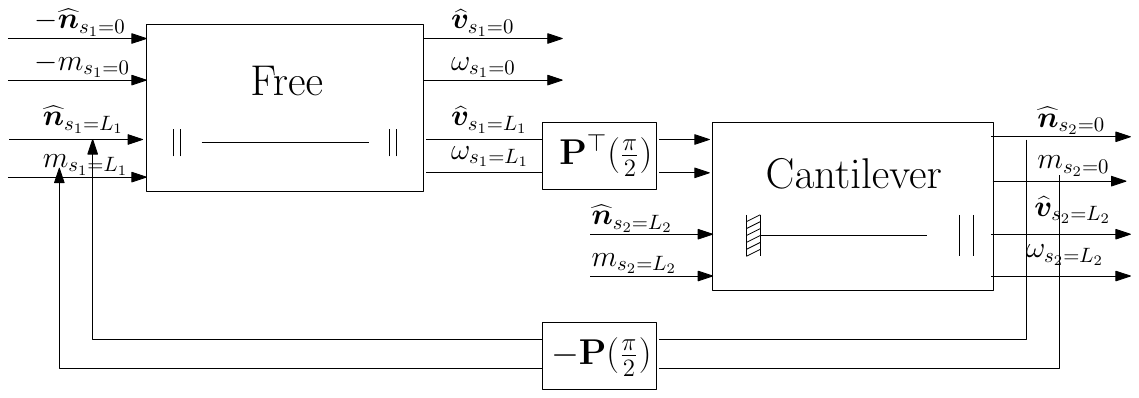}
    \caption{L-shaped frame obtained as interconnected of a free beam and a cantilever one}
    \label{fig:Lshaped_blockdiagram}
\end{figure}

\begin{figure}[htb]
    \centering

    \begin{subfigure}[b]{0.45\linewidth}
        \centering
        \includegraphics[width=0.9\linewidth]{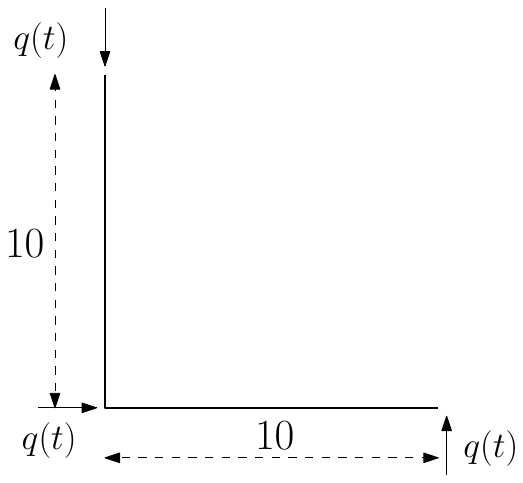}
        \caption{L-shaped frame}
        \label{fig:sketch_Lframe}
    \end{subfigure}
    \hfill
    \begin{subfigure}[b]{0.45\linewidth}
        \centering
        \begin{tikzpicture}[scale=1]
            \draw[->] (0,0) -- (6,0) node[right] {$t$};
            \draw[->] (0,0) -- (0,3) node[above] {$q(t)$};
            
            \coordinate (O) at (0,0);
            \coordinate (A) at (2.5,2.5);
            \coordinate (B) at (5,0);
            
            \draw[thick, blue] (O) -- (A) -- (B);
            \draw[thick, blue] (5,0) -- (6,0);
            
            \draw[dashed] (A) -- (2.5,0) node[below] {$0.5$};
            \draw[dashed] (A) -- (0,2.5) node[left] {$175$};
            \draw[dashed] (B) -- (5,-0.1) node[below] {$1$};
        \end{tikzpicture}
        \caption{Loading function $q(t)$}
        \label{fig:Lframe_loading}
    \end{subfigure}

    \caption{Geometry and loading definition for the L-shaped frame}
    \label{fig:Lframe_geometry_loading}
\end{figure}

\begin{table}[htb]
    \centering
    \begin{tabular}{ccccccccc}
    \hline
    & $\Delta t$ [s] & $T$ [s] & $N_{\mathrm{el}}$ [-] & $\rho A$ [kg/m] & $\rho I$ [kg\,m] & $EA$ [N] & $GA$ [N] & $EI$ [N\,m$^2$] \\
    \hline
    Rigid & $0.01$ & 5 & 10 & $1$ & $0.1$ & $10^7$ & $5 \cdot 10^6$ & $10^6$\\
    Soft  & $0.01$ & 5 & 10 & $0.5$ & $0.05$ & $10^{5}$ & $5 \cdot 10^4$ & $10^4$ \\
    \hline
\end{tabular}
    \caption{Parameters for the L-shaped frame}
    \label{tab:parameters_Lshaped_frame}
\end{table}

\begin{figure}[htbp]
    \centering
    \begin{subfigure}[b]{0.48\linewidth}
        \centering
        \includegraphics[width=\linewidth]{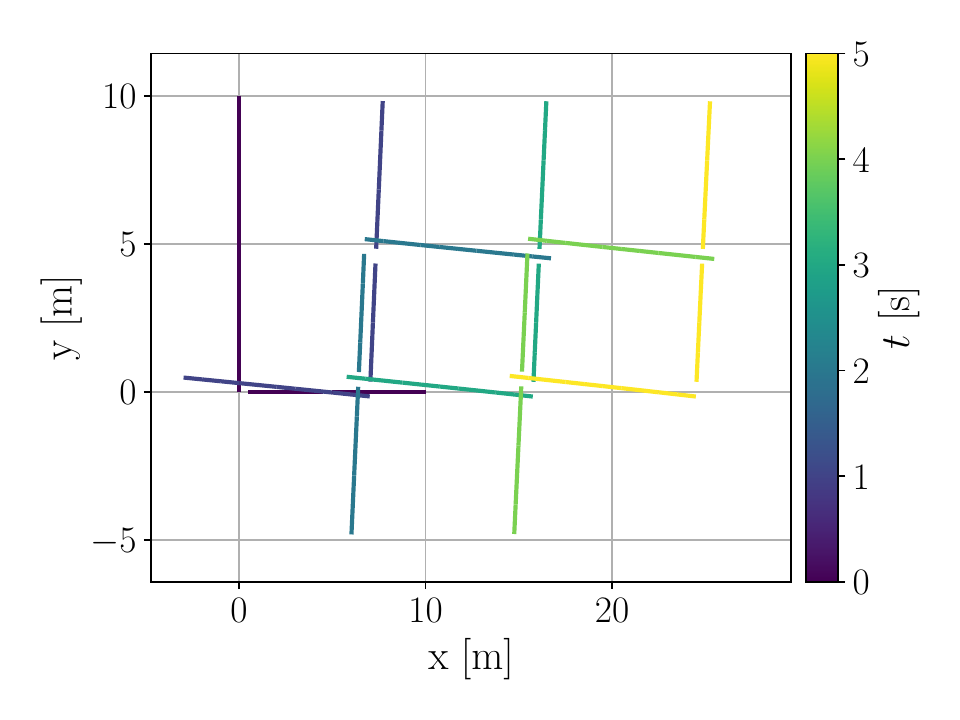}
        \caption{Rigid L-shaped frame.}
        \label{fig:configurations_Lframe_stiff}
    \end{subfigure}
    \hfill
    \begin{subfigure}[b]{0.48\linewidth}
        \centering
        \includegraphics[width=\linewidth]{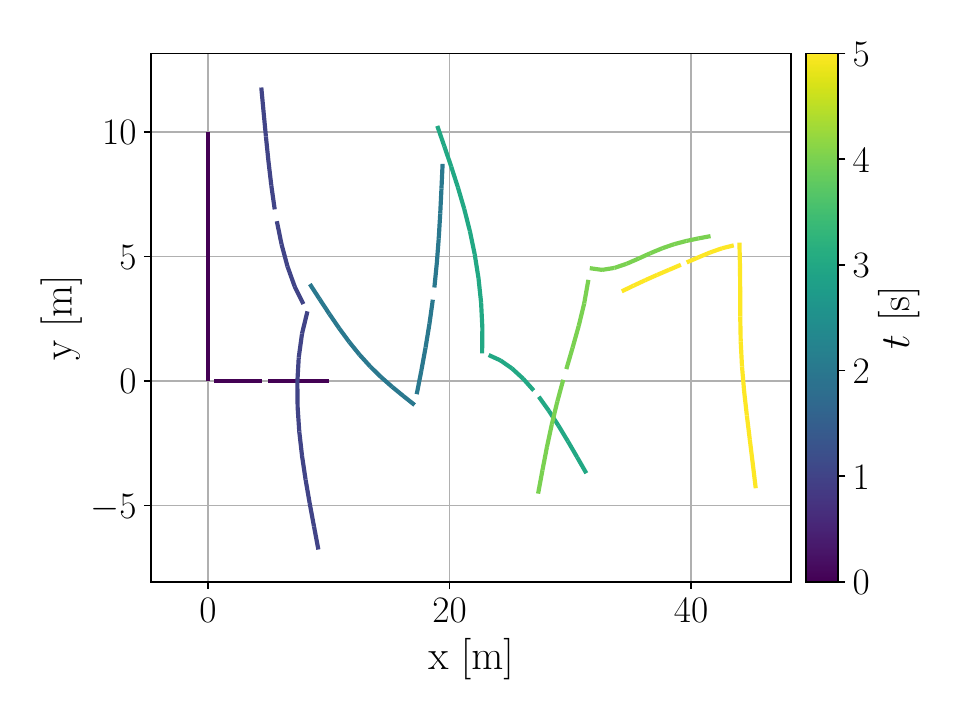}
        \caption{Soft L-shaped frame.}
        \label{fig:configurations_Lframe_soft}
    \end{subfigure}
    \caption{Output configurations for the L-shaped frames at time instants $t = \{1,2,3,4,5\} \; \mathrm{[s]}$.}
    \label{fig:configurations_Lframe}
\end{figure}

\begin{figure}[htb]
    \centering
    \begin{subfigure}[b]{0.48\textwidth}
        \centering
        \includegraphics[width=\textwidth]{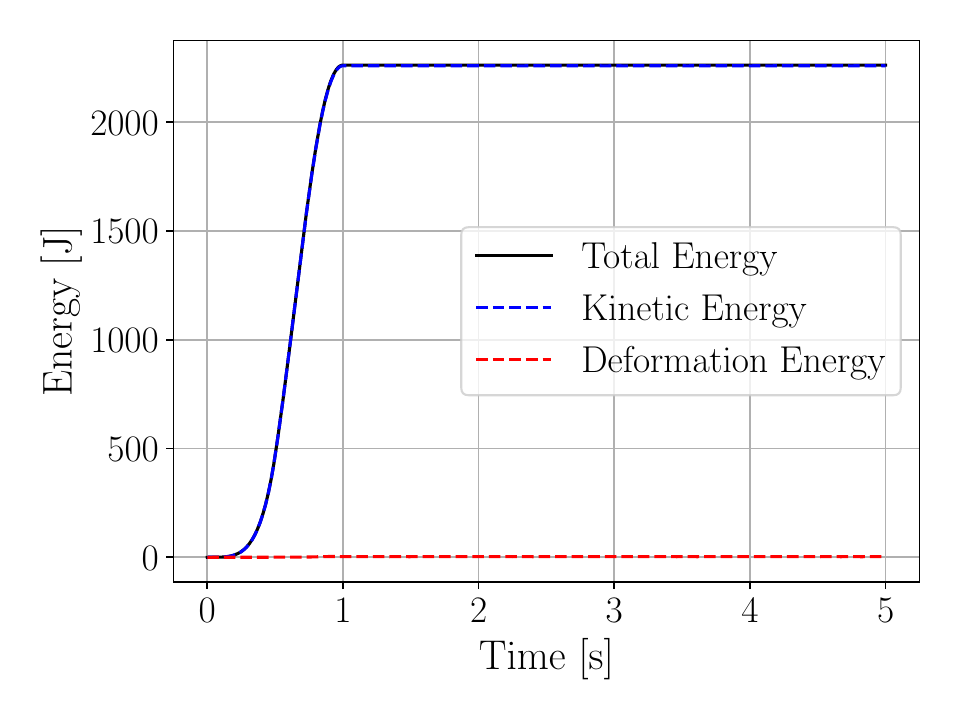}
    \end{subfigure}
    \hfill
    \begin{subfigure}[b]{0.48\textwidth}
        \centering
        \includegraphics[width=\textwidth]{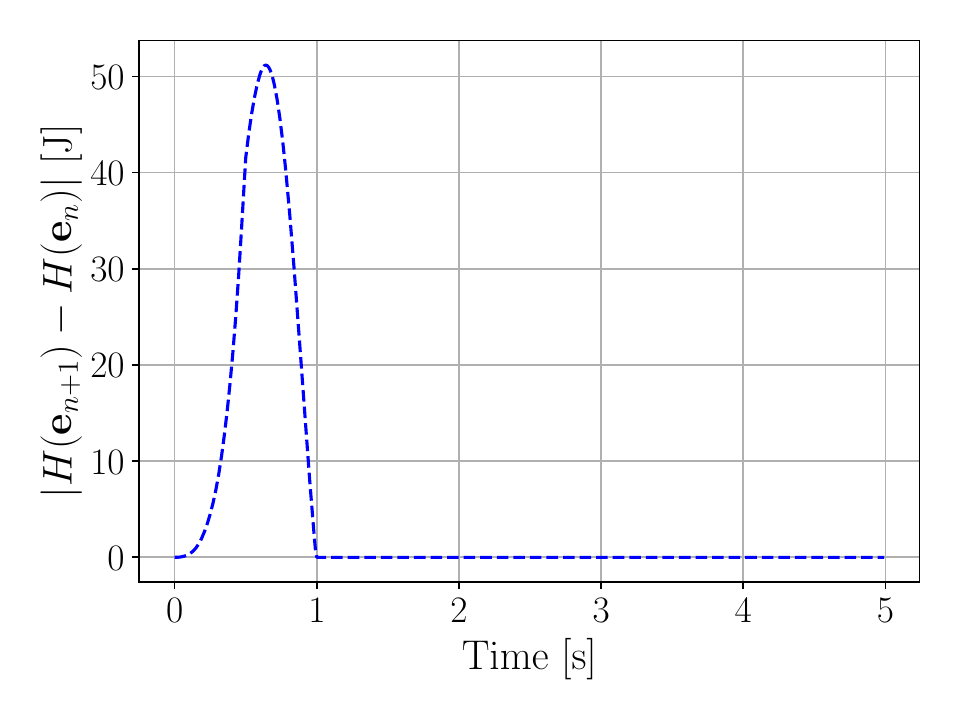}
    \end{subfigure}
    \caption{Rigid L-frame: evolution of energy and energy increments.}
    \label{fig:energy_Lframe_stiff}
\end{figure}

\begin{figure}[htb]
    \centering
    \begin{subfigure}[b]{0.48\textwidth}
        \centering
        \includegraphics[width=\textwidth]{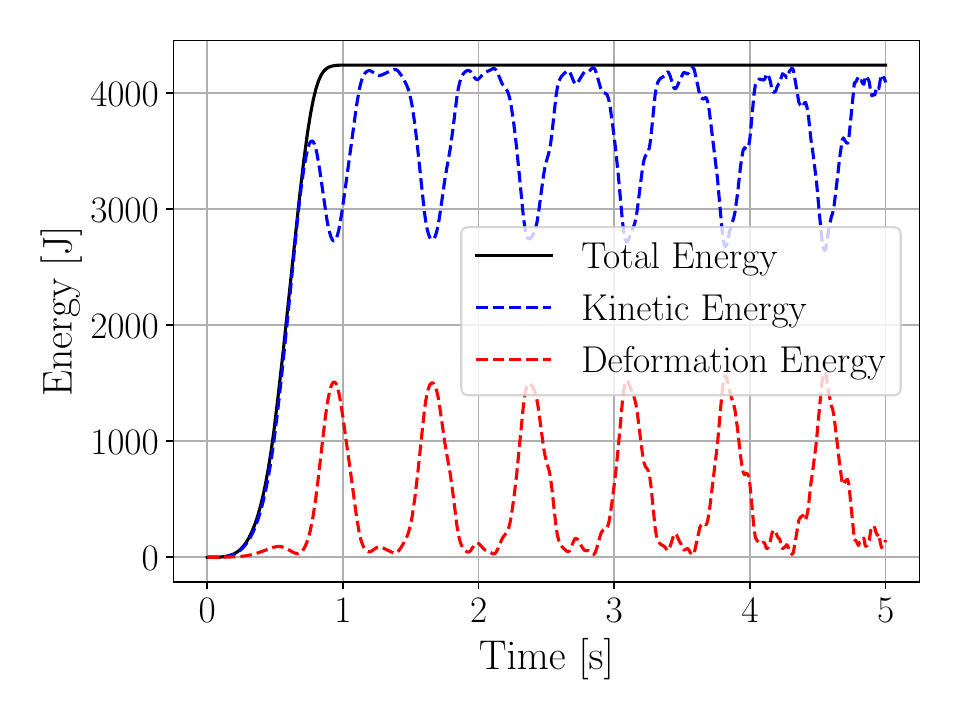}
    \end{subfigure}
    \hfill
    \begin{subfigure}[b]{0.48\textwidth}
        \centering
        \includegraphics[width=\textwidth]{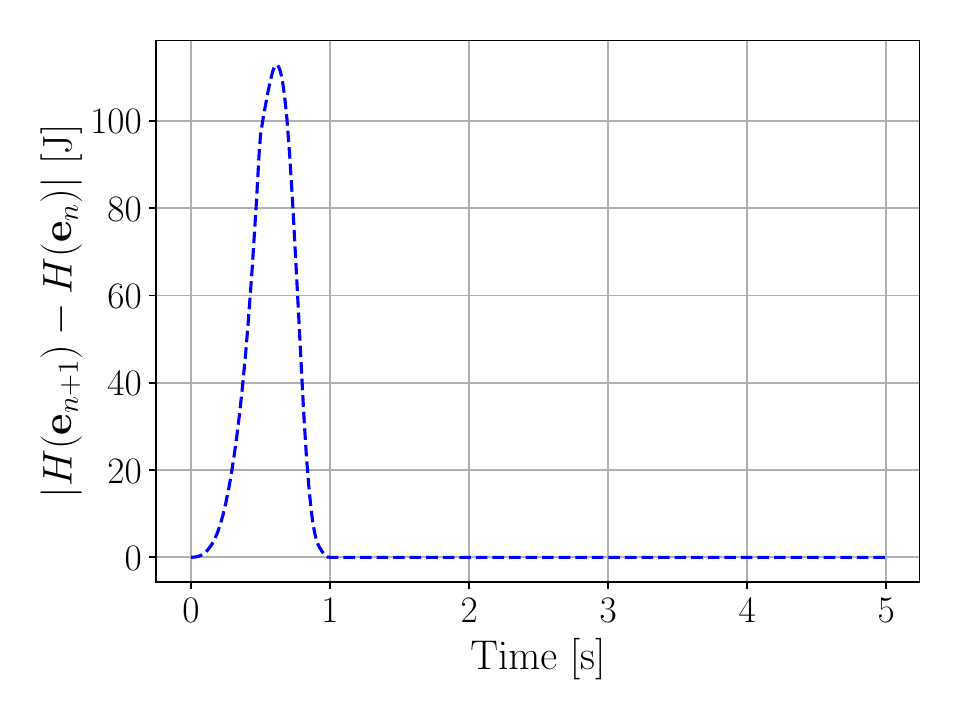}
    \end{subfigure}
    \caption{Soft L-frame: evolution of energy and energy increments}
    \label{fig:energy_Lframe_soft}
\end{figure}

\subsection{Four bar mechanism}
The four bar mechanism is a common benchmark for multibody dynamics problems \cite{shabana2020}. The geometry of the problem and the construction of the system is explained in Sec. \ref{sec:four_bar} and reported in Figs. \ref{fig:four_bar} and \ref{fig:four_bar_blockdiagram}. The system is subjected to gravity acting in the negative $y$-direction, which is incorporated into the proposed formulation by including distributed input forces with constant value $\bm{n}_{\rm ext} = - g\rho A \bm{b}_2$ as explained in Sec. \ref{sec:nonlinear_gravity}. The geometric and physical parameters are reported in Table \ref{tab:parameters_fourbars}. The analysis focuses on the motion of the center point $\mathsf P$ of the middle beam, whose initial position is given by $\bm{r}_{\mathsf P}(t=0) = (2.75,\, 1.5)$. The position of point $\mathsf P$ is shown in Fig.~\ref{fig:r_P_four_bar} and the configurations of the system at different time instants are shown in Fig.~\ref{fig:configurations_four_bar}. The results coincide with the findings in~\cite{kinon2025energy}. Fig.~\ref{fig:results_four_bar_energy} verifies the discrete-time energy-preservation of the proposed integration approach. 

\begin{table}[ht]
\centering
\begin{tabular}{ccccccccc}
\hline
$\Delta t$ [s] & $T$ [s] & $N_{\mathrm{el}}$ [-] & $L_i$ [m] & $\rho$ [kg/m$^3$] & $A$ [m$^2$] & $I$ [m$^4$] & $E$ [Pa] & $\nu$ [-] \\
\hline
0.02 & 10 & $\{10, 6, 6\}$ & $\{\sqrt{5}, 1.5, \sqrt{2}\}$ & 2710 & $0.05^2$ & $0.05^4/12$ & $2.1 \cdot 10^{11}$ & 0.3 \\
\hline
\end{tabular}
\caption{Parameters for the multibody system.}
\label{tab:parameters_fourbars}
\end{table}

\begin{figure}[tbh] 
\centering 
\begin{subfigure}[b]{0.48\textwidth} 
\centering \includegraphics[width=\textwidth]{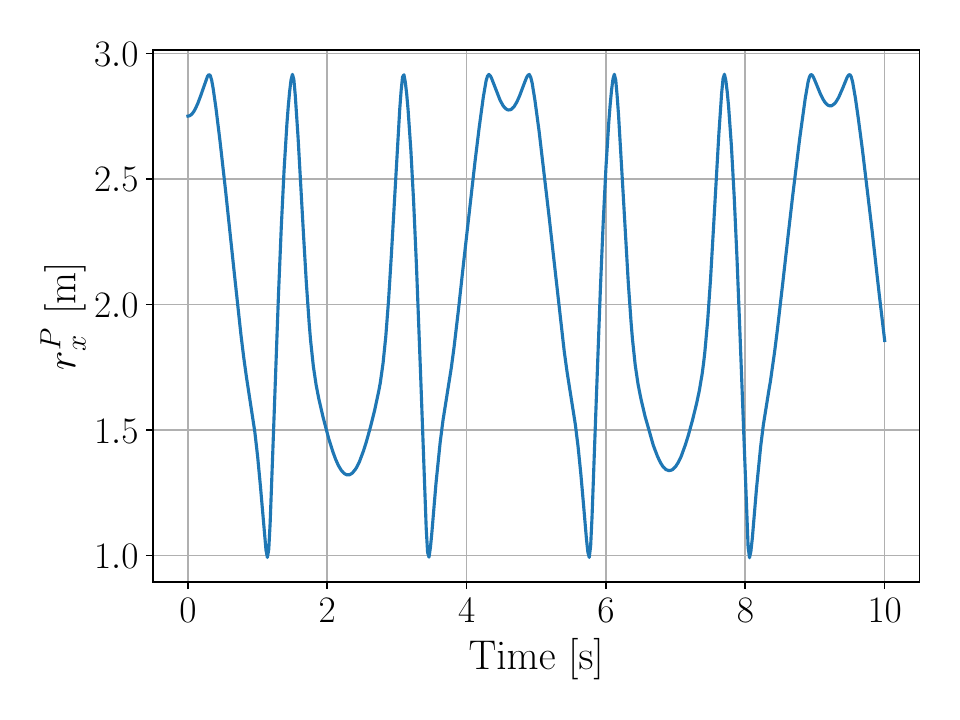} \caption{$r_{\mathsf{P},x}(t)$} 
\end{subfigure} 
\hfill 
\begin{subfigure}[b]{0.48\textwidth} 
\centering 
\includegraphics[width=\textwidth]{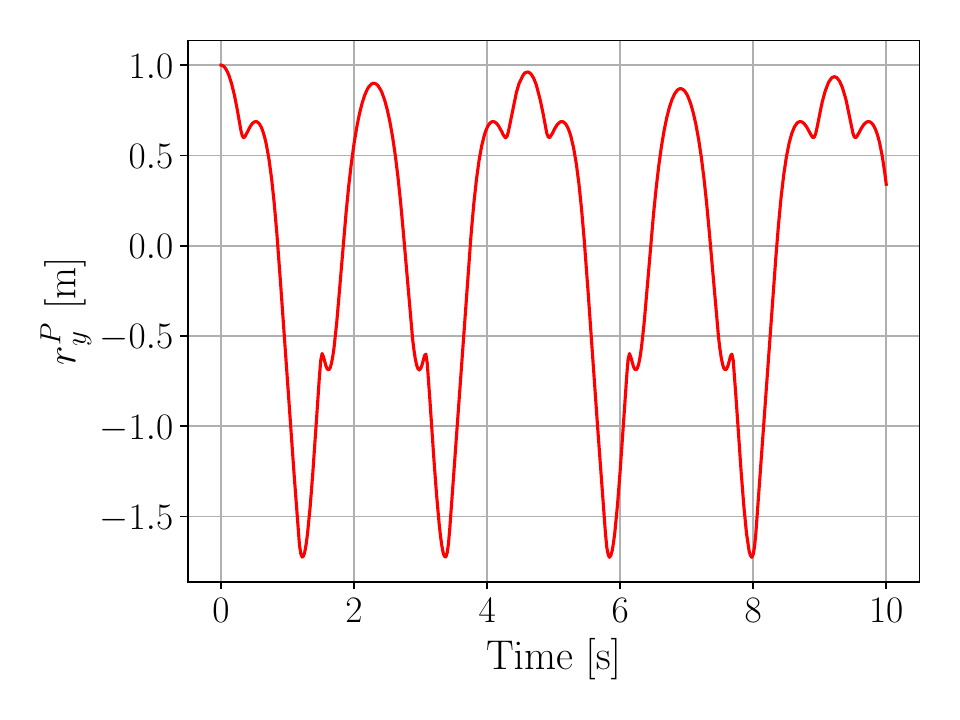} \caption{$r_{\mathsf{P},y}(t)$} 
\end{subfigure} 
\caption{Position of point $\mathsf P$ over time} \label{fig:r_P_four_bar} 
\end{figure} 

\begin{figure}[tbh]
\centering

\begin{minipage}{0.48\textwidth}
    \centering
    \includegraphics[width=\textwidth]{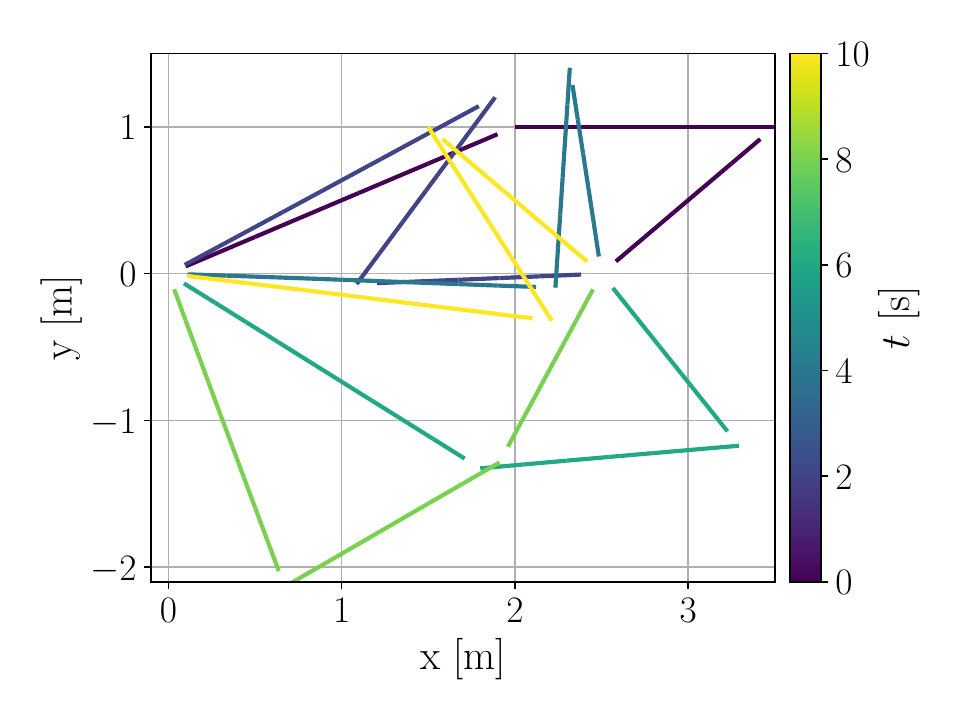}
    \caption{Configurations for the closed loop multibody mechanism}
    \label{fig:configurations_four_bar}
\end{minipage}
\hfill
\begin{minipage}{0.48\textwidth}
    \centering
    \includegraphics[width=\textwidth]{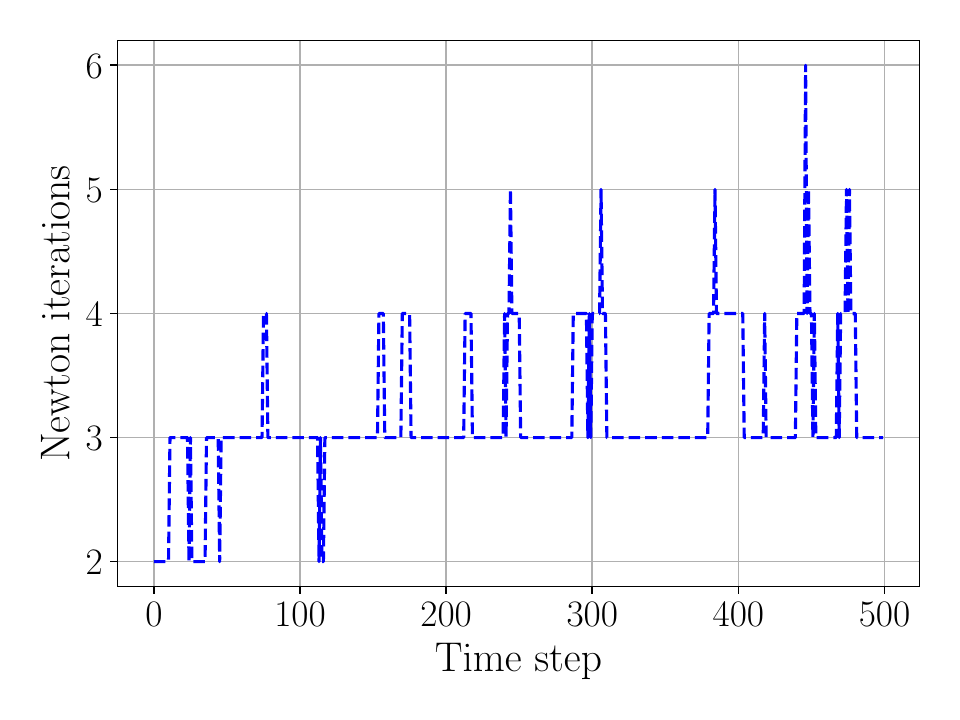}
    
    \caption{Newton iterations for the four bar mechanism }
\label{fig:newton_iterations_four_bar}
\end{minipage}

\end{figure}

\begin{figure}
    \centering
    \input{images/results/four_bar_mechanism/stiff/newton_iterations_EM.tikz}
    \caption{Comparison of Newton iterations for different integrators}
    \label{fig:comparison_newton_iterations}
\end{figure}

\begin{figure}[htb]
    \centering
    \begin{subfigure}[b]{0.45\textwidth}
        \centering
        \includegraphics[width=\textwidth]{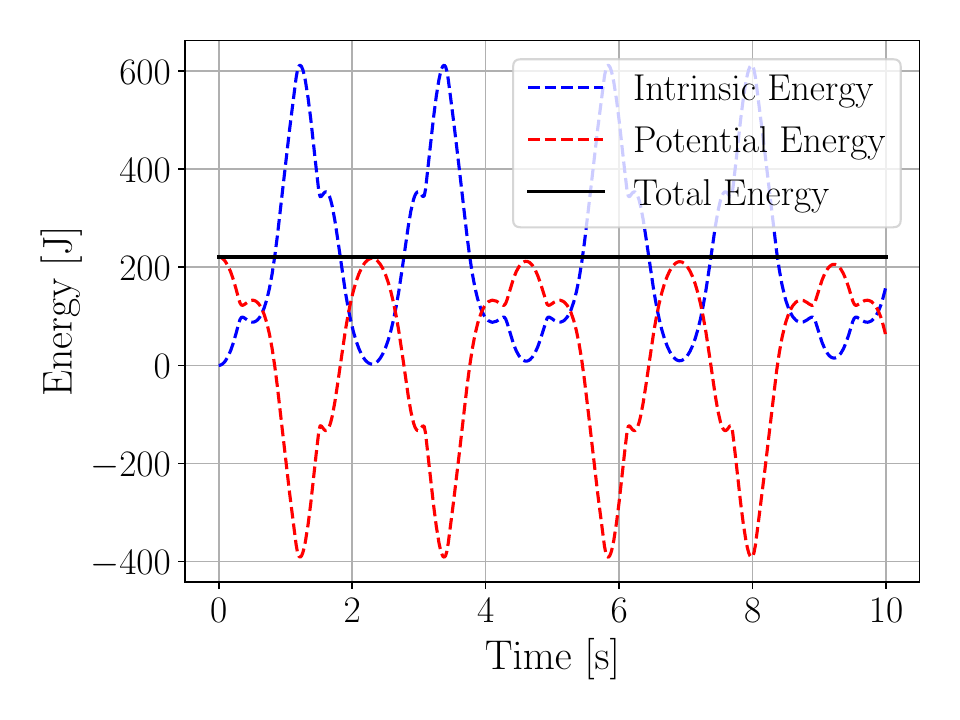}
    \end{subfigure}
    \hfill
    \begin{subfigure}[b]{0.45\textwidth}
        \centering
        \includegraphics[width=\textwidth]{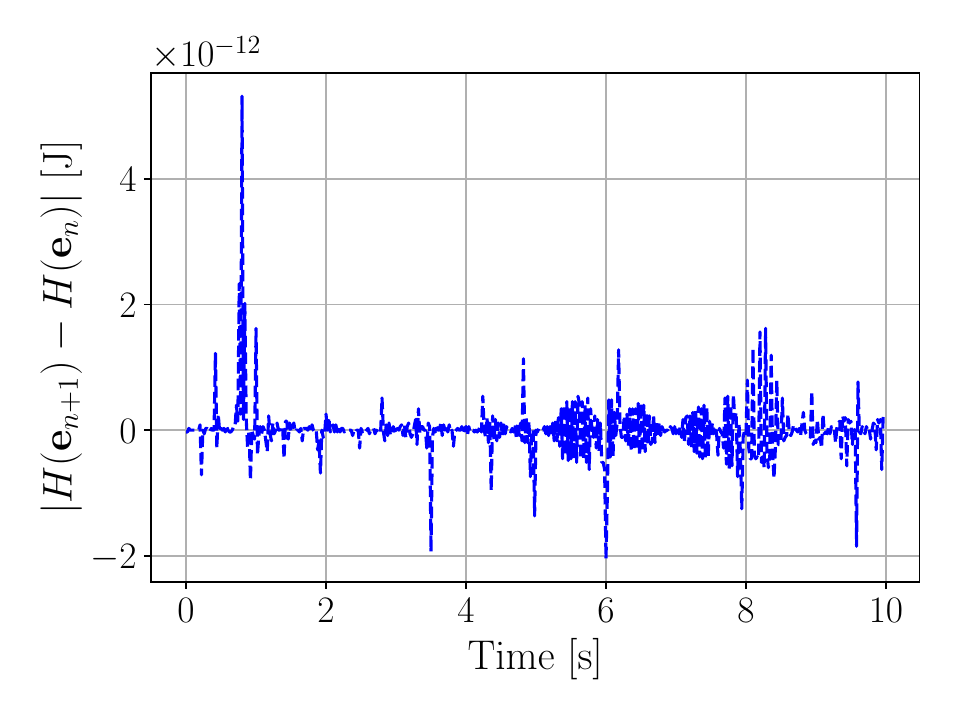}
    \end{subfigure}
    \caption{The intrinsic energy, i.e. the sum of kinetic and deformation energies $H=E_{\rm kin} + E_{\rm def}$, potential energy and total energy and total energy increments for the four bar mechanism.}
    \label{fig:results_four_bar_energy}
\end{figure}

In Fig. \ref{fig:newton_iterations_four_bar} the number of Newton iterations at each time step is shown for different methods: the port-Hamiltonian energy momentum preserving scheme (PH-EM) and its irreducible implementation (PH-EMi) presented in \cite{kinon2025energy},  the classical energy momentum scheme (cl-EM) proposed in \cite{stander1996energy} and the proposed approach based on the intrinsic formulation (labeled PH intrinsic). To establish a fair comparison the Newton's method tolerance is set to $10^{-5}$ for all methods and a central difference approach is used to calculate the Jacobian matrix. The proposed formulation requires less iterations than all other methods. Furthermore, the number of degrees of freedom for each beam scales as $O(5N_{el})$ for the intrinsic method because one needs to solve for five fields ($\theta, \; \mat{v}, \; \omega, \; \mat{n}, \; m$). In contrast, the method proposed in \cite{kinon2025energy} requires $O(6N_{el})$ degrees of freedom for each beam, since in the spatial-material formulation the dynamics of the centerline position is coupled to the other variables. The numbers of variables for further methods from the literature are reported in Table~\ref{tab:placeholder}.
\begin{table}[]
    \centering
    \begin{tabular}{c|cccc}
     & cl-EM  & PH-EM & PH-EMi & PH intrinsic\\
     \hline
     $\#$Variables/beam   & $O(4N_{el})$ & $O(6N_{el})$ & $O(4N_{el})$ & $O(5N_{el})$ \\
     Fields  & ($\bm{r}, \; \theta, \; \mat{v}, \; \omega$) & ($\bm{r}, \; \theta, \; \mat{v}, \; \omega, \; \mat{n}, \; m$) & ($\bm{r}, \; \theta, \; \mat{v}, \; \omega$) & ($\theta, \; \mat{v}, \; \omega, \; \mat{n}, \; m$)
    \end{tabular}
    \caption{Number and nature of variables for different schemes}
    \label{tab:placeholder}
\end{table}

\section{Conclusion}
In this work, a structure-preserving discretization for multibody systems of geometrically exact beams has been proposed. The strategy relies on an intrinsic port-Hamiltonian beam model with different mixed finite element representations, that allow for different natural boundary conditions. Via feedback interconnections, multibody system can be assembled without the need for Lagrange multipliers. 
Given the quadratic form of the total energy, the implicit midpoint integrator allows an exact discrete-time representation of the total energy balance.

To make the presented framework applicable to generic systems, the procedure can be automated to construct a library commonly found multibody assembly (for instance three rods combined in a triangular shape \cite{finozzi2022parametric}). For some configurations, however, it may not be possible to interconnect systems with different boundary conditions and avoid Lagrange multipliers. For instance, when multiple beams converge to a single pivot node one should be treated as free and the other as pinned but this may generate conflicts with the rest of the network. To avoid Lagrange multipliers in general regularization approaches (i.e. by adding artificially small inertia) may be used.

To improve the computational efficiency of the proposed methodology, the integration approach presented in \cite{brugnoli2025nonlinear} can be applied to the present formulation. In this integrator, the rotation and coenergy variables are staggered in time and therefore decoupled. Since the nonlinear coupling terms between different multibody components are bilinear in the rotation and coenergy variables (cf. Eq. \eqref{eq:discrete_nonlinear_multibody}), this temporal decoupling renders their integration linear. The only remaining nonlinearity arises from the intrinsic dynamics, which exhibits a block-diagonal structure. As a result, the number of Newton iterations required at each time step may be further reduced. 

An extension of our approach to three-dimensional systems of beams is a natural outlook, see \cite{kinon2026mixed} for the spatial-material formulation. However, parametrizing the rotation without using Lagrange multipliers represents a difficult task. Approaches based on exponential coordinates and incremental rotation updates avoid the enforcement of orthonormality or unit-length constraints, while remaining consistent with geometrically exact beam theories \cite{bruls2012lie,yu2013attitude,muller2018lie,todesco2023exp}. Lastly, the present framework is especially designed for optimization, control and estimation purposes, in which one typically avoids the incorporation of algebraic constraints \cite{dopico2014ode,barfoot2024robotics}.

\section*{Code Availability}
The code used to generate the results in this study will be made available after acceptance.

\appendix

\section{Construction of the $\mathbf{P}({\theta})$ matrix}
\label{sec:p_matrix}

The $\mathbf{P}$ matrix, discussed in Sec. \ref{sec:nonlinear_gravity}, takes different algebraic realizations depending on the formulation. The overall structure is the following
$$
\mathbf{P}(\bm{\theta}) = \begin{bmatrix}
    0 & 0 & \mathbf{R}(\bm{\theta}) & 0 \\
    0 & 0 & 0 & \mathbf{A}
\end{bmatrix}
$$
where $\mathbf{R}(\bm{\theta})$ is associated with the term $\int_\Omega \bm{\psi}_v \bm{\Lambda}(\theta_h) \bm{v}_h \d{s}$ and $\mathbf{A}$ is associated with the term $\int_\Omega \psi_\omega \omega \d{s}$. This is because positions and angular rotations are discretized using the same space of velocities and angular velocities respectively. $\mathbf{A}$ can be either $\mathbf{M}$ (the mass matrix obtained using linear Lagrange finite elements) when the angular velocity is discretized via a continuous space or $h\mathbf{I}$ if the angular velocity is discretized using a discontinuous space.

In the following, the matrices 
$$\mathbf{\Phi} = \begin{bmatrix}
    \phi_1 & \phi_2 & \dots & \phi_{N_{el} + 1}
\end{bmatrix} \in \mathbb{R}^{1\times N_{el+1}}
\qquad\qquad \mathbf{\Xi} = \begin{bmatrix}
    \xi_1 & \xi_2 & \dots & \xi_{N_{el}}
\end{bmatrix} \in \mathbb{R}^{1\times N_{el}}
$$
collect the finite element bases in a single row vector. The $\mathbf{R}$ and $\mathbf{A}$ matrices
for the different formulations are as follows
\begin{itemize}
\item Free case:
$$
\mathbf{R}(\bm{\theta}) = \int_\Omega \mathbf{\Phi}_{\otimes 2}^\top \bm{\Lambda}(\mathbf{\Phi}\bm{\theta}) \mathbf{\Phi}_{\otimes 2} \d{s}, \qquad\qquad \mathbf{A} = \mathbf{M}.
$$
\item Clamped case:
$$
\mathbf{R}(\bm{\theta}) = \int_\Omega \mathbf{\Xi}_{\otimes 2}^\top \bm{\Lambda}(\mathbf{\Xi}\bm{\theta}) \mathbf{\Xi}_{\otimes 2} \d{s}, \qquad\qquad \mathbf{A} = h\mathbf{I}.
$$
\item Pinned case:
$$
\mathbf{R}(\bm{\theta}) = \int_\Omega \mathbf{\Xi}_{\otimes 2}^\top \bm{\Lambda}(\mathbf{\Phi}\bm{\theta}) \mathbf{\Xi}_{\otimes 2} \d{s}, \qquad\qquad \mathbf{A} = \mathbf{M}.
$$
\item Guided case:
$$
\mathbf{R}(\bm{\theta}) = \int_\Omega \mathbf{\Phi}_{\otimes 2}^\top \bm{\Lambda}(\mathbf{\Xi}\bm{\theta}) \mathbf{\Phi}_{\otimes 2} \d{s}, \qquad\qquad \mathbf{A} = h\mathbf{I}.
$$
\end{itemize}

\bibliographystyle{elsarticle-num}
\bibliography{biblio}

\end{document}